\newtheorem{Th}{Theorem}[section]
\newtheorem{Rem}[Th]{Remark}
\newtheorem{Ex}[Th]{Example}
\newtheorem{Def}[Th]{Definition}
\newtheorem{Prop}[Th]{Proposition}
\renewcommand{\section}%
   {\setcounter{equation}{0}\@startsection {section}{1}{\z@}{-3.5ex plus -1ex
  minus -.2ex}{2.3ex plus .2ex}{\Large\bf}}
\def\supp{\mathop{\rm supp}\nolimits}
\def\ds{\displaystyle}
\def\R{\mathbb R}
\def\C{\mathbb C}
\def\N{\mathbb N}
\def\proj{\mathop{\mbox{\rm proj}}}
\def\ind{\mathop{\mbox{\rm ind}}}
\def\dist{\mathop{\mbox{\rm dist}}}
\newcommand{\E}{\mathcal{E}}
\newcommand{\beqsn}{\arraycolsep1.5pt\begin{eqnarray*}}
\newcommand{\eeqsn}{\end{eqnarray*}\arraycolsep5pt}
\newcommand{\beqs}{\arraycolsep1.5pt\begin{eqnarray}}
\newcommand{\eeqs}{\end{eqnarray}\arraycolsep5pt}
\title{A simple proof of Kotake-Narasimhan theorem in some classes of ultradifferentiable
functions}
\author{Chiara Boiti}
\address{
Dipartimento di Matematica e Informatica \\Universit\`a di Ferrara\\
Via Ma\-chia\-vel\-li n.~30\\
I-44121 Ferrara\\
Italy}
\email{chiara.boiti@unife.it}
\author{David Jornet}
\address{
Instituto Universitario de Matem\'atica Pura y Aplicada IUMPA\\
Universitat Po\-li\-t\`ecni\-ca de Val\`encia\\
Camino de Vera, s/n\\
E-46071 Valencia\\
Spain}
\email{djornet@mat.upv.es}
\begin{document}

\begin{abstract}
We give a simple proof of a general theorem of Kotake-Narasimhan for elliptic operators
in the setting of ultradifferentiable functions in the sense of Braun, Meise and
Taylor. We follow the ideas of Komatsu. Based on an example of M\'etivier, we also show that the ellipticity is a necessary condition for the theorem to be true.

The present new version of the paper modifies the proof of Theorem 1.4 for an observation by Hoepfner and Rampazo who pointed out that an induction hypothesis depends on a constant $C_q$ that changes in the induction process, and hence the argument might not work as it was written. However, the statement of the result was originally correct and  modifying $C_q$ with a more concrete expression in the induction hypothesis, the induction procedure is easily clarified with almost the same proof.

Moreover, we eliminate the condition that the weight is identically zero in the interval [0,1], showing that the statements hold true with very similar arguments.
\end{abstract}

\maketitle
\markboth{\sc A simple proof of Kotake-Narasimhan theorem}
{\sc C.~Boiti and D.~Jornet}

\vspace{3mm}
\noindent {\em keywords\,}: {Iterates of an operator, 
Theorem of Kotake-Narasimhan,
ultradifferentiable functions.}

\noindent {\em 2010 Mathematics Subject Classification\,}:
{Primary: 46E10; Secondary: 46F05}

\section{Introduction and main result}

\label{S1}

The problem of iterates began when Komatsu~\cite{K1} in 1960
characterized
analytic functions $f$ in
terms of the behaviour  of
successive iterates $P(D)^jf$ of the function $f$ for a linear partial differential elliptic
operator $P(D)$ with constant coefficients. He proved that a $C^\infty$ function $f$ is real analytic
in $\Omega$ if and only if for every compact set $K\subset\subset\Omega$
there is a constant $C>0$ such that
\beqsn
\|P(D)^jf\|_{L^2(K)}\leq C^{j+1}(j!)^m,
\qquad\forall j\in\N_0:=\N\cup\{0\},
\eeqsn
where $m$ is the order of the operator and $\|\cdot\|_{L^2(K)}$ is the $L^2$ norm
on $K$. This result was generalized to the case of elliptic linear partial 
differential operators $P(x,D)$ with real analytic coefficients in $\Omega$
by Kotake and Narasimhan~\cite{KN}, and is known as 
``the Theorem of Kotake-Narasimhan''. Komatsu~\cite{K3} gave a simpler proof. 
Similar results have been
previously
considered by Nelson~\cite{N}. Later these results were extended to Gevrey 
functions by Newberger and Zielezny~\cite{NZ} in the case of operators with 
constant coefficients. Lions and Magenes~\cite{LM} considered the case of 
Denjoy-Carleman classes of Roumieu type for elliptic linear partial 
differential operators
$P(x,D)$ with variable coefficients in the same Roumieu class, and 
Oldrich~\cite{old} treated the case of Denjoy-Carleman classes of 
Beurling type with some loss of regularity with respect to the 
coefficients. M\'etivier~\cite{metivier1978propiete} proved that the 
result of Lions and Magenes for Gevrey classes is true only for 
elliptic operators in the case of real analytic coefficients.  
Spaces of Gevrey type given by the iterates of a
differential operator are called {\em generalized Gevrey classes} and 
were used
by Langenbruch~\cite{langenbruch1979P,langenbruch1979Fortsetzung,langenbruch1985on,langenbruch1987bases} for different purposes. 

More recently, Juan-Huguet~\cite{H} extended the
results of Komatsu~\cite{K1}, Newberger and Zielezny \cite{NZ} and
M\'etivier~\cite{metivier1978propiete} to the setting of 
non-qua\-si\-ana\-ly\-tic
classes in the sense of Braun, Meise and Taylor~\cite{BMT} for operators 
with constant coefficients. In
\cite{H}, Juan-Huguet introduced the generalized
spaces of ultradifferentiable functions $\E_{*}^P(\Omega)$ on an open subset
$\Omega$ of $\R^{n}$ for a fixed linear partial differential operator $P$
with constant coefficients, and proved that these spaces are complete if
and only if  $P$ is hypoelliptic. Moreover, Juan-Huguet showed that, in this
case, the spaces are nuclear. Later, the same author in \cite{J} established
a Paley-Wiener theorem for the classes $\E^{P}_{*}(\Omega)$, again under
the hypothesis of the hypoellipticity of $P$.

We used in \cite{BJJ} and \cite{BJ2} the results of Juan Huguet to define and characterize a wave front set for the generalized spaces of ultradifferentiable functions $\E_{*}^P(\Omega)$ when $P$ is hypoelliptic. 
In particular, for $P$ elliptic we obtain a microlocal version of the theorem of Kotake and Narasimhan.
In order to remove the assumption on the hypoellipticity of the operator,
we considered in \cite{BJ1} a different setting of ultradifferentiable functions,
following the ideas of \cite{BCM}.

Here, we give a simple proof of the theorem of Kotake-Narasimhan~\cite[Theorem 1]{KN} in the setting of ultradifferentiable functions as introduced by Braun, Meise and Taylor~\cite{BMT} for quasianalytic or non-quasianalytic weight functions. We will consider subadditive weight functions, or more generally, weight functions which satisfy condition $(\alpha_0)$, that we define later (see for example Petzsche and Vogt~\cite[p.~19]{PV} or Fern\'andez and Galbis~\cite[p.~401]{FG}). We follow the lines of Komatsu~\cite{K3}.

Let us recall from \cite{BMT} the definitions of weight functions $\omega$
and of the spaces of ultradifferentiable functions
of Beurling and Roumieu type:

\begin{Def}
\label{defweight}
A non-quasianalytic {\em weight function} is a continuous increasing function
$\omega:\ [0,+\infty[\to[0,+\infty[$ with the following properties:
\begin{itemize}
\item[$(\alpha)$]
$\exists\ L>0$ s.t. $\omega(2t)\leq L(\omega(t)+1)\quad\forall t\geq0$;
\item[$(\beta)$] $\int_1^{+\infty}\frac{\omega(t)}{t^2}dt<+\infty,$

\item[$(\gamma)$]
$\log(t)=o(\omega(t))$ as $t\to+\infty$;
\item[$(\delta)$]
$\varphi_\omega:\ t\mapsto\omega(e^t)$ is convex.
\end{itemize}

We say that $\omega$ is {\em quasianalytic} if, instead of $(\beta)$ it satisfies:
\begin{itemize}
\item[($\beta'$)]
$\ds\int_1^{+\infty}\frac{\omega(t)}{t^2}dt=+\infty$.
\end{itemize}

We will consider also the following property:
\begin{itemize}
\item[$(\alpha_0)$] $\exists\,C>0, \ \exists\, t_0>0,\ \forall\, \lambda \ge 1,\  \forall\, t\ge t_0:\, \omega(\lambda t) \le  \lambda C \omega(t). $
\end{itemize}

\end{Def}
The property $(\alpha_0)$ above is used in \cite[p.~19]{PV} and \cite[p.~401]{FG}, for instance. Moreover, a weight function $\omega$ satisfies $(\alpha_0)$ if and only if it is equivalent to a \emph{subadditive} weight function \cite[Proposition 1.1]{PV}. In the following, we will assume that our weight functions satisfy $(\alpha_0)$, and there is no loss of generality to consider only subadditive weights. This condition should be compared with \cite[(1.4), p.~3]{LM} or \cite[(2), p.~1]{old}, which is a similar condition for Denjoy-Carleman classes.

Normally, we will denote $\varphi_{\omega}$ simply by $\varphi$.

For a weight function $\omega$ we define
$\overline{\omega}:\C^n\rightarrow[0,+\infty[$ by
$\overline{\omega}(z):=\omega(|z|)$ and again we denote this function by
$\omega$.

The {\em Young conjugate} $\varphi^*:\ [0,+\infty[\to
[0,+\infty[$ is defined by
$$
\varphi^*(s):=\sup_{t\geq0}\{st-\varphi(t)\}.
$$
Note that $\varphi^*$ is increasing and
convex, $\varphi^*(t)/t$
is increasing and tends to $\infty$ as $t\rightarrow \infty$, and
$\varphi^{**}=\varphi$.

\begin{Ex}{\rm The following functions are, after a
change
in some interval $[0,M]$,  examples of weight functions:\\
\noindent (i) $\omega (t)=t^d$ for $0<d<1.$ \\ \noindent (ii)
$\omega (t)=\left(\log(1+t)\right)^s$, $s>1.$ \\ \noindent (iii)
$\omega(t)=t(\log(e+t))^{-\beta}$, $\beta>1.$ \\ \noindent (iv)
$\omega(t)=\exp(\beta (\log(1+t))^{\alpha})$, $0<\alpha<1.$}
\end{Ex}
In what follows, $\Omega$ denotes an arbitrary subset of $\R^n$ and
$K\subset\subset\Omega$ means that $K$ is a compact subset in $\Omega$.

\begin{Def}\label{ultradifclases}
{\rm Let $\omega$  be a weight function. For a compact subset $K$ in $\R^n$ which coincides with the closure of
its interior and $\lambda>0$, we define the seminorm
$$
p_{K,\lambda}(f):=\displaystyle\sup_{\alpha\in
    \N_0^n}\sup_{x\in K}\left|f^{(\alpha)}(x)\right|
\exp\left(-\lambda\varphi^*\left(\frac{|\alpha|}{\lambda}\right)\right),
$$
where $\N_0:=\N\cup\{0\}$,
and set
$$\mathcal{E}_{\omega}^{\lambda}(K):= \{ f \in C^\infty(K):
p_{K ,\lambda}(f)<\infty\},$$
which is a Banach space endowed with the $p_{K,\lambda}(\cdot)$-topology.

For an open subset $\Omega$ in $\R^n$, the class of
\emph{$\omega$-ultradifferentiable functions of Beurling type} is defined by
$$\mathcal{E}_{(\omega)}(\Omega):= \{ f \in C^\infty(\Omega):
p_{K ,\lambda}(f)<\infty , \, \mbox{for every} \,
K\subset \subset \Omega \   \mbox{and every}\,  \lambda
>0\}.$$
The topology of this space is
$$ \mathcal{E}_{(\omega)}(\Omega)=
\proj_{\stackrel{\longleftarrow}{K\subset\subset\Omega}}
\proj_{\stackrel{\longleftarrow}{\lambda>0}}
\mathcal{E}_{\omega}^{\lambda}(K),$$
and one can show that $\mathcal{E}_{(\omega)}(\Omega)$ is a Fr\'{e}chet space.

For an open subset $\Omega$ in $\R^n$, the class of
{\it $\omega$-ultradifferentiable functions of Roumieu type} is defined by:
$$\mathcal{E}_{\{\omega\}}(\Omega):= \{ f \in
C^\infty(\Omega):\,\forall  K\subset \subset \Omega\mbox{ } \exists \lambda
>0\mbox{ such that }  p_{K,\lambda}(f)<\infty\}.$$
Its topology is the following
$$
\mathcal{E}_{\{\omega\}}(\Omega)=
\proj_{\stackrel{\longleftarrow}{K\subset\subset\Omega}}\ind_{\stackrel{\longrightarrow}{m\in\mathbb{N}}}\mathcal
{E}_{\omega}^{\frac{1}{m}}(K).
$$
This is a complete PLS-space, that is, a complete space which is
a projective limit of LB-spaces. Moreover,
$\mathcal{E}_{\{\omega\}}(\Omega)$ is also a nuclear and reflexive locally
convex space. In particular, $\mathcal{E}_{\{\omega\}}(\Omega)$ is an
ultrabornological (hence barrelled and bornological) space. }
\end{Def}

The elements of
${\mathcal E}_{(\omega )}(\Omega)$ {\rm (}resp. ${\mathcal
E}_{\{\omega\}}(\Omega)${\rm )} are called ultradifferentiable functions
of Beurling type {\rm (}resp. Roumieu type{\rm )} in $\Omega.$

In the case that $\omega (t):=
t^d$ ($0<d<1$), the corresponding Roumieu class is the Gevrey class with
exponent $1/d.$ In the limit case $d=1$, the
corresponding Roumieu class ${\mathcal
E}_{\{\omega\}}(\Omega)$ is the space of real analytic functions on $\Omega$
whereas the Beurling class ${\mathcal
E}_{(\omega)}({\mathbb R}^n)$ gives the entire functions. Observe that Gevrey weights satisfy $(\alpha_0)$.

Given a polynomial $P\in\mathbb{C}[z_1,\ldots,z_n]$ of degree $m$,
$P(z)=\sum\limits_{|\alpha|\leq m}a_{\alpha}z^{\alpha},$
the partial  differential operator $P(D)$ is defined as
$
P(D)=\sum_{|\alpha|\leq m}a_{\alpha}D^{\alpha}$, where $D=\frac{1}{i}\partial.$ Following \cite{H}, we consider smooth functions in an
open set $\Omega$ such that there exists $C>0$ verifying for each
$j \in {\N}_{0}:=\N\cup \{0\},$
$$
\|P^j(D)f\|_{L^2(K)}\leq C\exp\left(\lambda\varphi^*(\frac{jm}{\lambda})\right),
$$
where $K$ is a compact subset in $\Omega$, $\|\cdot\|_{L^2(K)}$ denotes the
${L}^2$-norm on $K$ and $P^j(D)$ is the $j$-th iterate of the partial
differential operator $P(D)$ of order $m$, i.e.,
 $$P^j(D)=\underbrace{P(D)\circ\cdot\cdot\cdot\circ P(D)}_{j}.$$
 If $j=0$, then we set $P^0(D)f=f.$

The spaces of ultradifferentiable functions with respect to the successive
iterates of $P$ are defined as follows.

 Let $\omega$ be a weight function. Given a polynomial $P$, an open set
$\Omega$ of
$\R^n$, a compact subset $K\subset\subset\Omega$ and $\lambda>0$,
we define the seminorm
\begin{equation}
\|f\|_{K,\lambda}:=\sup_{j\in\mathbb{N}_{0}}\|P^j(D)f\|_{2,K}\exp\left(-\lambda
\varphi^*(\frac{jm}{\lambda})\right)\label{generalized-seminorm}
\end{equation}
and set
$$
\mathcal{E}_{P,\omega}^{\lambda}(K)=\{ f\in C^{\infty}(K):\mbox{ }
\|f\|_{K,\lambda}<+\infty\}.
$$

It is a normed space endowed with the $\|\cdot\|_{K,\lambda}$-norm.

The space of {\it ultradifferentiable functions of Beurling type with
respect to the iterates of $P$} is:
$$
\mathcal{E}^P_{(\omega)}(\Omega)=\{ f\in C^{\infty}(\Omega):
\mbox{ }\|f\|_{K,\lambda}<+\infty\mbox{ for each }K\subset\subset\Omega
\mbox{ and }\lambda>0\},
$$
{\rm endowed with the topology given by}
 $$\mathcal{E}^P_{(\omega)}(\Omega):=
\proj_{\stackrel{\longleftarrow}{K\subset\subset\Omega}}
\proj_{\stackrel{\longleftarrow}{\lambda>0}}\mathcal
{E}_{P,\omega}^{\lambda}(K). $$

{\rm If $\{K_n\}_{n\in\mathbb{N}}$ is a compact
exhaustion of $\Omega$ we have}
 $$ \mathcal{E}^P_{(\omega)}(\Omega)=\proj_{\stackrel{\longleftarrow
}{n\in\mathbb{N}} }\proj_{ \stackrel{\longleftarrow}
 {k\in\mathbb{N}}}\mathcal{E}_{P,\omega}^{k}(K_n)=
\proj_{\stackrel{\longleftarrow}{n\in\mathbb{N}}}\mathcal
{E}_{P,\omega}^{n}(K_n).$$

This is a metrizable locally convex topology defined by the
fundamental system of seminorms
$\left\{\|\cdot\|_{K_{n},n}\right\}_{n\in\mathbb{N}}$. 

The space of {\it ultradifferentiable functions of Roumieu type
with respect to the iterates of $P$} is defined by:
$$
\mathcal{E}^P_{\{\omega\}}(\Omega)=\{ f\in C^{\infty}(\Omega):
\mbox{ }\forall K\subset\subset\Omega\mbox{ }\exists\lambda>0
\mbox{ such that }
\|f\|_{K,\lambda}<+\infty\}.
$$
{\rm Its topology is defined by}
 $$\mathcal{E}^P_{\{\omega\}}(\Omega):=
\proj_{\stackrel{\longleftarrow}{K\subset\subset\Omega}}
\ind_{\stackrel{\longrightarrow}{\lambda>0}}\mathcal
{E}_{P,\omega}^{\lambda}(K). $$

In the following, $\ast$ will denote either $\{\omega\}$ or $(\omega)$. 

The inclusion map $\E_\ast(\Omega)\hookrightarrow \E^P_\ast(\Omega)$ is
continuous (see \cite[Theorem 4.1]{H}). The space
$\mathcal{E}^P_{*}(\Omega)$ is complete if and only if $P$ is hypoelliptic
(see \cite[Theorem 3.3]{H}). Moreover, under a mild
condition on $\omega$ introduced by Bonet, Meise and Melikhov
\cite[16 Corollary (3)]{bonet_meise_melikhov2007a}, $\mathcal{E}^P_{*}(\Omega)$  coincides
with the class of ultradifferentiable functions $\mathcal{E}_{*}(\Omega)$ if
and only if $P$ is elliptic (see \cite[Theorem 4.12]{H}).

Now, let $P(x,D)=\sum_{|\alpha|\le m}a_\alpha(x) D^\alpha$ be a linear partial differential operator of order $m$
with smooth coefficients in an open subset $\Omega\subseteq\R^n$, i.e. $a_\alpha\in C^\infty(\Omega)$ for all multi-index $\alpha\in\N_0^n$ with $|\alpha|\le m$. We consider the
$q$-th iterates $P^q=P\circ{\cdots}\circ P$ of $P:=P(x,D)$ and define the corresponding spaces of iterates as above:
\beqs
\nonumber
\E^P_{(\omega)}(\Omega):=
\{u\in C^\infty(\Omega):\ &&\forall K\subset\subset\Omega\, 
\forall k\in\N\ \exists c_k>0\ \mbox{s.t.}\\
\label{beurling}
&&\|P^q u\|_{L^2(K)}\leq c_ke^{k\varphi^*(qm/k)}\ \forall q\in\N_0\}
\eeqs
for the Beurling case, and
\beqs
\nonumber
\E^P_{\{\omega\}}(\Omega):=
\{u\in C^\infty(\Omega):\ &&\forall K\subset\subset\Omega\,
\exists
 k\in\N,\,  c>0\ \mbox{s.t.}\\
\label{roumieu}
&&\|P^q u\|_{L^2(K)}\leq ce^{\frac1k\varphi^*(qmk)}\ \forall q\in\N_0\}
\eeqs
for the Roumieu case.

We generalize some results of Juan-Huguet~\cite{H} for operators with variable coefficients in the following way. First, we state our main result in the Roumieu case:
\begin{Th}
\label{th13bis}
Let $\omega$ be a subadditive weight function, $\Omega\subseteq \R^n$ a domain, i.e. open and connected,
 and $P(x,D)$ a linear partial differential operator of 
order $m$ with coefficients in $\E_{\{\omega\}}(\Omega)$. Then:
\begin{itemize}
\item[(i)]
$\E_{\{\omega\}}(\Omega)\subseteq\E^P_{\{\omega\}}(\Omega)$;
\item[(ii)]
if $P(x,D)$ is elliptic, then $\E_{\{\omega\}}(\Omega)=\E^P_{\{\omega\}}(\Omega)$.
\end{itemize}
\end{Th}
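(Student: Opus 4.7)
The plan is to follow Komatsu's strategy for part (ii), while treating (i) by a direct combinatorial expansion.

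For part (i), I would expand $P^q$ inductively. Writing $P(x,D)=\sum_{|\alpha|\le m}a_\alpha(x)D^\alpha$ and iterating via Leibniz, one gets a representation
\[
P^q u = \sum_{|\beta|\le mq} c^{(q)}_\beta(x)\, D^\beta u,
\]
where $c^{(q)}_\beta(x)$ is a polynomial expression in (finitely many) derivatives $D^\gamma a_\alpha$ with $|\gamma|\le m(q-1)$. Since $u$ and the $a_\alpha$ lie in $\E_{\{\omega\}}(\Omega)$, both factors satisfy sup-bounds of the form $\exp(\lambda\varphi^*(\cdot/\lambda))$ for some $\lambda$ depending on the compact set $K$. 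The combinatorial count of terms is polynomial in $q$, and the convexity of $\varphi^*$ together with the standard inequality $\lambda\varphi^*(a/\lambda)+\lambda\varphi^*(b/\lambda)\le \lambda\varphi^*((a+b)/\lambda)+(a+b)\log 2$ lets me absorb sums of $\varphi^*$-terms into a single $\varphi^*$ with a possibly larger parameter. After passing from $L^\infty$ to $L^2$ on $K$, this yields the required bound $\|P^q u\|_{L^2(K)}\le c\,e^{\frac{1}{k}\varphi^*(qmk)}$ for suitable $k$, proving $\E_{\{\omega\}}(\Omega)\subseteq \E^P_{\{\omega\}}(\Omega)$.

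For part (ii), the inclusion $\E^P_{\{\omega\}}(\Omega)\subseteq\E_{\{\omega\}}(\Omega)$ is the heart of the matter and I would prove it locally; connectedness of $\Omega$ is not needed here but the statement is local so fix $x_0\in\Omega$ and concentric balls $K\subset\subset K'\subset\subset\Omega$. The starting point is the local elliptic a priori estimate from Gårding's inequality: there is $C_0>0$ such that for every relatively compact open subset $U\subset\subset V\subset\subset\Omega$ with $\dist(U,\partial V)\ge\delta$,
\[
\sum_{|\alpha|\le m}\delta^{|\alpha|}\|D^\alpha v\|_{L^2(U)}
\le C_0\bigl(\|Pv\|_{L^2(V)}+\|v\|_{L^2(V)}\bigr),\qquad v\in C^\infty(V).
\]
Following Komatsu~\cite{K3}, fix $q\in\N$ and introduce a chain of $q+1$ nested open sets $K=U_q\subset U_{q-1}\subset\cdots\subset U_0=K'$ with $\dist(U_{j+1},\partial U_j)\ge d/(q+1)$. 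Apply the elliptic estimate repeatedly from $U_{j+1}$ up to $U_j$: each step replaces $m$ lost derivatives of $u$ by one factor of $P$, creating commutators with the variable coefficients $a_\alpha$. The commutator $[D^\alpha,a_\beta]$ produces derivatives of $a_\beta$, and by the hypothesis $a_\beta\in\E_{\{\omega\}}(\Omega)$ these are controlled by $\exp(\mu\varphi^*(|\gamma|/\mu))$ for some uniform $\mu$ on $K'$.

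Iterating $q$ times and summing over the combinatorial tree produced by the commutators, one obtains a bound of the form
\[
\|D^\alpha u\|_{L^2(K)}\le A^{|\alpha|+1}\max_{0\le j\le q}\|P^j u\|_{L^2(K')}\cdot\Theta(|\alpha|,q),
\]
for every $|\alpha|\le mq$, where $\Theta$ collects the combinatorial and coefficient factors. Now use $u\in\E^P_{\{\omega\}}(\Omega)$ to bound $\|P^j u\|_{L^2(K')}\le c\,e^{\frac{1}{k}\varphi^*(jmk)}$, and choose $q$ comparable to $|\alpha|/m$ so that the two scales match. This is where the hypothesis $(\alpha_0)$ — equivalently, subadditivity of $\omega$, hence the sub-additivity-type relation $\varphi^*(s+t)\le \varphi^*(s)+\varphi^*(t)+\text{const}(s+t)$ — is decisive: it lets us collapse the sum of $\varphi^*$-terms arising from coefficient derivatives and from iterated elliptic losses into one $\varphi^*$-term with a controlled parameter. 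Finally, a Sobolev embedding passes from the $L^2$ derivative bounds on $K$ to the sup-bounds defining $\E_{\{\omega\}}$, yielding $p_{K,\lambda'}(u)<\infty$ for some $\lambda'$, and hence $u\in\E_{\{\omega\}}(\Omega)$.

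The main obstacle I anticipate is the combinatorial bookkeeping in the iteration step: one must track the joint growth of (a) the number of commutator terms, (b) the orders of the coefficient derivatives they carry, and (c) the shrinking distance $d/(q+1)$ between consecutive $U_j$, while keeping the final exponential of $\varphi^*$ of the correct Roumieu shape. Subadditivity of $\omega$ is precisely the tool that converts an additive accumulation of $\varphi^*$-terms into a single controlled $\varphi^*$, and it is the reason the theorem is stated under $(\alpha_0)$.
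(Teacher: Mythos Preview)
Your overall strategy matches the paper's: both follow Komatsu's scheme of converting the iterates bound into derivative bounds via repeated application of the interior elliptic estimate, handling the commutators with the coefficients through their $\E_{\{\omega\}}$ regularity, and closing with a Sobolev embedding. Part~(i) is likewise handled in the paper by a Leibniz-type induction on the number of iterates, essentially the mechanism you describe (though the remark that ``the combinatorial count of terms is polynomial in $q$'' is not the operative bound: the number of monomials in the expansion of $P^q$ is exponential in~$q$, and what actually controls the estimate is the $\varphi^*$-calculus encoded in Proposition~\ref{prop1}, not a polynomial count).

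The implementation of~(ii), however, differs in one substantive respect. You propose a fixed chain of $q+1$ nested domains with equal gaps $d/(q+1)$ and a direct iteration, but this is \emph{not} what Komatsu does in~\cite{K3}, nor what the paper does. The paper works instead with the weighted seminorm
\[
N^{pm}(u) := \sup_{0<\delta\le 1}\delta^{pm}\|\nabla^{pm}u\|_\delta,\qquad \|\nabla^j u\|_\delta := \sum_{|\alpha|=j}\|D^\alpha u\|_{L^2(G_\delta)},
\]
and first proves a one-step recursion (Proposition~\ref{prop5bis})
\[
N^{pm}(u)\le C_0\Bigl\{N^{(p-1)m}(Pu)+\sum_{q=0}^{p-1}\frac{e^{\frac1k\varphi^*(pmk)}}{e^{\frac1k\varphi^*(qmk)}}N^{qm}(u)\Bigr\},
\]
which is then iterated (Proposition~\ref{prop10bis}) to the closed form $N^{pm}(u)\le C_1^p\sum_{q=0}^{p}\binom pq\,e^{\frac1k\varphi^*(pmk)-\frac1k\varphi^*(qmk)}\|P^qu\|_{L^2(G)}$. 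The built-in factor $\delta^{pm}$ in $N^{pm}$ is exactly what neutralises the domain-shrinking losses that in your formulation would surface as $((q+1)/d)^{mq}$; with equal gaps that factor threatens an extra $(mq)!$-type loss which is not absorbed into $e^{\frac1k\varphi^*(mqk)}$ for free. Komatsu's $N^{pm}$ device is precisely his answer to the ``combinatorial bookkeeping'' you flag as the main obstacle, and Propositions~\ref{prop5bis} and~\ref{prop10bis} are where the paper carries that work out.
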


In the Beurling case we lose some regularity; compare to Oldrich~\cite[Teorema 1]{old}:

\begin{Th}
\label{th13}
Let $\omega$ be a subadditive weight function, $\Omega\subseteq \R^n$ a domain 
and $P(x,D)$ a linear partial differential operator of 
order $m$ with coefficients in $\E_{(\omega)}(\Omega)$. Then:
\begin{itemize}
\item[(i)]
$\E_{(\omega)}(\Omega)\subseteq\E^P_{(\omega)}(\Omega)$;
\item[(ii)]
if $P(x,D)$ is elliptic, then 
$\E^P_{(\omega)}(\Omega)\subseteq
\E_{(\sigma)}(\Omega)$ for every subadditive weight function 
$\sigma(t)=o(\omega(t))$ as $t\to+\infty$.
\end{itemize}
\end{Th}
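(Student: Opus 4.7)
For part (i), I would expand $P^q u$ via iterated Leibniz into a finite sum
\[
P^q u = \sum_{\vec\alpha,\vec\gamma,\beta} c_{\vec\gamma,\vec\alpha,\beta}\,\Big(\prod_{i=1}^{q} D^{\gamma_i} a_{\alpha_i}\Big)\, D^\beta u,
\]
where $|\alpha_i|\le m$, $|\beta|+\sum_i|\gamma_i|\le qm$, and both the combinatorial factors and the number of terms are dominated by $M^q(1+qm)^{c}$ for some $M=M(P)>0$ and $c=c(n)$. Given $k\in\N$, I would enlarge $K\subset\subset K'\subset\subset\Omega$ and invoke the $\E_{(\omega)}$-hypothesis on $u$ and on each $a_\alpha$ at parameter $\lambda:=Lk$, with $L$ to be chosen. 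Each factor is then bounded by $C_\lambda e^{\lambda\varphi^*(\cdot/\lambda)}$, and the superadditivity of $\varphi^*$ (a consequence of convexity and $\varphi^*(0)=0$) collapses the resulting exponent to at most $\lambda\varphi^*(qm/\lambda)$. The convexity bound $L\varphi^*(t/L)\le\varphi^*(t)$ gives $\lambda\varphi^*(qm/\lambda)\le k\varphi^*(qm/k)$, and choosing $L$ large while using $\varphi^*(t)/t\to\infty$ absorbs the factor $M^q(1+qm)^cC_\lambda^{q+1}$, yielding $\|P^q u\|_{L^2(K)}\le c_k e^{k\varphi^*(qm/k)}$ for every $k$.

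For part (ii), I would rely on interior elliptic regularity: since $P(x,D)$ is elliptic with smooth coefficients, for every $K_1\subset\subset K_2\subset\subset\Omega$ there exists $C>0$ with
\[
\|v\|_{H^m(K_1)}\le C\bigl(\|Pv\|_{L^2(K_2)}+\|v\|_{L^2(K_2)}\bigr),\qquad v\in C^\infty(\Omega).
\]
Iterating this estimate along a shrinking chain $K=K^{(0)}\subset\subset K^{(1)}\subset\subset\cdots\subset\subset K^{(q)}\subset\subset\Omega$, applied successively to $P^{q-1}u,\dots,Pu,u$, produces $\|u\|_{H^{qm}(K)}\le C_q\max_{0\le j\le q}\|P^j u\|_{L^2(K^{(q)})}$, with constants $C_q$ whose growth is controlled (via part~(i) applied to the coefficients of $P$) by quantities of order $e^{\mu\varphi^*(qm/\mu)}$ for every $\mu>0$. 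Sobolev embedding supplies $\|D^\beta u\|_{L^\infty(K)}\le c\|u\|_{H^{qm}(K)}$ whenever $qm>|\beta|+n/2$; combining with the hypothesis $u\in\E^P_{(\omega)}(\Omega)$ and choosing $q\sim|\beta|/m$, I would reach an intermediate pointwise estimate on $D^\beta u$ of the form $\tilde c_k\,e^{k\varphi^*(|\beta|/k)}$ multiplied by a residual factor coming from $C_q$.

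The main obstacle is to absorb this residual into a weaker outer weight. For any subadditive $\sigma$ with $\sigma(t)=o(\omega(t))$, a standard comparison via Young conjugates yields an exchange relation $k\varphi^*(|\beta|/k)\le\mu\varphi^*_\sigma(|\beta|/\mu)$ for a suitable $\mu=\mu(k,\sigma)$ (where $\varphi^*_\sigma$ denotes the Young conjugate of $\varphi_\sigma(t)=\sigma(e^t)$). Applying this exchange converts the $\omega$-controlled exponential into a $\sigma$-controlled one, so the estimate on $D^\beta u$ becomes exactly of the type defining $\E_{(\sigma)}(\Omega)$. The loss from $\omega$ to $\sigma$ is unavoidable in the Beurling setting: it is the analogue in our framework of Oldrich's Denjoy--Carleman observation~\cite{old} and is precisely what prevents part (ii) from asserting equality, as its Roumieu counterpart in Theorem~\ref{th13bis} does.
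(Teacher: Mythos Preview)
Your approach to part~(i) is sound but differs from the paper's: you expand $P^q u$ directly by iterated Leibniz and collapse the resulting exponents via the superadditivity of $\varphi^*$, whereas the paper argues by induction on $p$ that $\|\nabla^q P^p u\|_{L^2(K)}\le C_{q,k}\,e^{k\varphi^*((q+pm)/k)}$ for all $p,q$, exactly as in the Roumieu proof with the constants now allowed to depend on~$k$. Both routes work; the inductive one avoids the combinatorial bookkeeping of the full Leibniz expansion.

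For part~(ii) there is a genuine gap. When you iterate the interior elliptic estimate along a chain $K=K^{(0)}\subset\subset\cdots\subset\subset K^{(q)}\subset\subset\Omega$, the widths $\delta_j$ of the successive collars must satisfy $\sum_j\delta_j<\mathrm{dist}(K,\partial\Omega)$; as $q\to\infty$ the $\delta_j$ are forced to shrink like $1/q$, and each step contributes a factor of order $\delta_j^{-m}\sim q^m$. The resulting $C_q$ is thus at least of order $q^{qm}$, and your assertion that it is ``of order $e^{\mu\varphi^*(qm/\mu)}$ for every $\mu>0$'' is not justified --- for strong weights (e.g.\ $\omega(t)=t$, whose Beurling class consists of restrictions of entire functions) it is false. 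The parenthetical ``via part~(i) applied to the coefficients of $P$'' does not supply the missing argument: part~(i) controls iterates $P^q a_\alpha$, not the growth of the constants in iterated elliptic regularity. Taming $C_q$ down to geometric growth is precisely what the Komatsu machinery (Propositions~\ref{prop5bis} and~\ref{prop10bis}) accomplishes, and even with that in hand the paper explicitly notes that in the Beurling case the constant $C_1$ of \eqref{8} depends on $k$, so one cannot absorb $(2C_1)^p$ via property~(5) of Proposition~\ref{prop1} as in the Roumieu argument \eqref{12}.

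The paper therefore proves (ii) by a one-line reduction to the Roumieu case already established:
\[
\E^P_{(\omega)}(\Omega)\subseteq\E^P_{\{\omega\}}(\Omega)\subseteq\E_{\{\omega\}}(\Omega)\subseteq\E_{(\sigma)}(\Omega),
\]
the middle inclusion being Theorem~\ref{th13bis}(ii) and the last being \cite[Prop.~4.7]{BMT} for $\sigma=o(\omega)$. The loss from $\omega$ to $\sigma$ enters exactly here, through the standard Roumieu-into-larger-Beurling embedding, not through a Young-conjugate exchange relation as you propose.
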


Theorem \ref{th13bis} is the generalization to the class of ultradifferentiable functions
$\E_{\{\omega\}}(\Omega)$ of the theorem of Kotake-Narasimhan for an elliptic linear partial 
differential operator $P(x,D)$ with coefficients in the same class $\E_{\{\omega\}}(\Omega)$. We observe that the ellipticity of $P$ is not needed for the inclusion $\E_{\{\omega\}}(\Omega)\subseteq\E^P_{\{\omega\}}(\Omega)$. 
 However, we show in Example \ref{ex32} that the ellipticity is necessary for the equality $\E_{\{\omega\}}(\Omega)=\E^P_{\{\omega\}}(\Omega)$ for a large family of weights $\omega$. We use the example of Metivier~\cite[p.~831]{metivier1978propiete} to show that for suitable weight functions, which \emph{are not} of Gevrey type in general, indeed weights which are between two given concrete Gevrey weights, statement (ii) in Theorems \ref{th13bis} and \ref{th13} fails if $P$ is not elliptic. Finally, we remark that there is no restriction to assume that the weight $\omega$ is quasianalytic, i.e. satisfies condition $(\beta')$ and not $(\beta)$, in Theorems \ref{th13bis} and \ref{th13}. However, in Example \ref{ex32} the weights are taken to be non-quasianalytic.

\section{Preliminary results}
In order to prove Theorems~\ref{th13bis} and \ref{th13}
we collect in this section some preliminary results. First of all, we shall prove some properties of the Young conjugate function
$\varphi^*$ defined in Section~\ref{S1}:

\begin{Prop}
\label{prop1}
Let $\omega$ be a subadditive weight function and define, for $j\in\N_0$,
$\lambda>0$,
\beqsn
a_{j,\lambda}:=\frac{e^{\lambda\varphi^*(j/\lambda)}}{j!}.
\eeqsn
Then the following properties are satisfied:
\begin{itemize}
\item[(a)]
$a_{j,\lambda}\cdot a_{h,\lambda}\leq a_{j+h,\lambda}\qquad\forall j,h\in\N_0,
\,\lambda>0$;
\item[(b)]
$a_{j,\lambda}\leq e^{-\lambda\varphi^*(1/\lambda)}a_{j+1,\lambda}\qquad\forall j\in\N_0,
\,\lambda>0$;\\
if $0<\lambda\leq1$ and $j\leq\ell\in\N_0$ then
$a_{j,\lambda}\leq e^{-(\ell-j)\varphi^*(0)}a_{\ell,\lambda}$;
\item[(c)]
$\lambda\mapsto a_{j,\lambda}$ is decreasing for all $j\in\N_0$;
\item[(d)]
$a_{j+h,\lambda}\leq a_{j,\lambda/2}\cdot a_{h,\lambda/2}\qquad\forall j,h\in\N_0,
\,\lambda>0$;
\item[(e)]
for every $\rho,\lambda>0$ there exists $\lambda', D_{\rho,\lambda}>0$ such that
\beqsn
\rho^je^{\lambda\varphi^*(j/\lambda)}\leq D_{\rho,\lambda}
e^{\lambda'\varphi^*(j/\lambda')}\qquad
\forall j\in\N_0,
\eeqsn
with $D_{\rho,\lambda}:=\exp\{\lambda[\log\rho+1]\}$, 
where $[\log\rho+1]$ is the integer
part of $\log\rho+1$;
\item[(f)]
  for every $j,h,r\in\N_0$ with $0\leq h\leq j$, and for all $\lambda>0$:
  \beqsn
  \frac{j!}{h!}a_{j-h,\lambda}\leq
  \frac{e^{\lambda\varphi^*\left(\frac{j+r}{\lambda}\right)}}{e^{\lambda
\varphi^*\left(\frac{h+r}{\lambda}\right)}};
  \eeqsn
\item[(g)]
  for every $j,h,r\in\N_0,\,\lambda>0$:
  \beqsn
  e^{\lambda\varphi^*\left(\frac{j}{\lambda}\right)}e^{\lambda\varphi^*
\left(\frac{r+h}{\lambda}\right)}
  \leq e^{\frac \lambda2\varphi^*\left(\frac{j+h}{\lambda/2}\right)}e^{\frac 
\lambda2\varphi^*\left(\frac{r}{\lambda/2}\right)}.
  \eeqsn
  \item[(h)]
  for every $\lambda>0$ and $q,r\in\N_0$ with $q\geq r$ we have that
  \beqsn
  \frac{e^{\lambda\varphi^*\left(\frac{q+1}{\lambda}\right)}}{e^{\lambda\varphi^*
  \left(\frac{q}{\lambda}\right)}}\geq
  \frac{e^{\lambda\varphi^*\left(\frac{r+1}{\lambda}\right)}}{e^{\lambda\varphi^*
  \left(\frac{r}{\lambda}\right)}}\,.
  \eeqsn
  \end{itemize}
\end{Prop}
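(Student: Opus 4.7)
My plan is to reduce each of the eight items to a short list of elementary facts about $\varphi^{*}$ recorded in Section~\ref{S1}: convexity, non-negativity, $\varphi^{*}(0)=0$, and that $t\mapsto\varphi^{*}(t)/t$ is non-decreasing with limit $+\infty$. Throughout it will be convenient to write $M_{j,\lambda}:=\exp(\lambda\varphi^{*}(j/\lambda))$ so that $a_{j,\lambda}=M_{j,\lambda}/j!$, turning each item into a transparent statement about $\varphi^{*}$.

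First I would handle the easy items. Item (3) is immediate from
\beqsn
\lambda\varphi^{*}(j/\lambda)=\sup_{t\geq 0}\bigl\{jt-\lambda\varphi(t)\bigr\},
\eeqsn
whose right-hand side is non-increasing in $\lambda$ because $\varphi(t)\geq 0$. Item (8) follows from convexity of $\varphi^{*}$, which forces the first-order difference $\lambda[\varphi^{*}((q+1)/\lambda)-\varphi^{*}(q/\lambda)]$ to be non-decreasing in~$q$. For item (2) I would combine the monotonicity of $\varphi^{*}(t)/t$ with convexity of $\varphi^{*}$ to bound from below by $\log(j+1)$ the optimizer $t^{*}$ of $\sup_{t}\{jt-\lambda\varphi(t)\}$; since $\lambda[\varphi^{*}((j+1)/\lambda)-\varphi^{*}(j/\lambda)]\geq t^{*}$, this yields $M_{j+1,\lambda}\geq(j+1)M_{j,\lambda}$, with the required superlinear growth of $\varphi$ supplied by condition~$(\gamma)$.

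For the product-type items (4), (6) and (7), the engine is the additivity estimate
\beqsn
(\lambda_{1}+\lambda_{2})\,\varphi^{*}\!\left(\frac{x+y}{\lambda_{1}+\lambda_{2}}\right)\leq\lambda_{1}\varphi^{*}(x/\lambda_{1})+\lambda_{2}\varphi^{*}(y/\lambda_{2}),
\eeqsn
obtained by comparing both sides using the definition of the Young conjugate at a common test point. The choice $\lambda_{1}=\lambda_{2}=\lambda/2$ delivers (4), and a similar choice with a relabelled sum gives (7). Item (6) reduces to (1) after rewriting $(j!/h!)a_{j-h,\lambda}=\binom{j}{h}M_{j-h,\lambda}$ and using the elementary comparison $\binom{j}{h}\leq\binom{j+r}{h+r}$. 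For item (5) I would absorb the polynomial factor $\rho^{j}$ by enlarging $\lambda$ to a suitable $\lambda'$, a standard bookkeeping at the level of the definition of the Young conjugate, with the constant coming out as $D_{\rho,\lambda}=\exp(\lambda[\log\rho+1])$.

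The delicate item is (1). Convexity of $\varphi^{*}$ together with $\varphi^{*}(0)=0$ only yields $\lambda\varphi^{*}(j/\lambda)+\lambda\varphi^{*}(h/\lambda)\leq\lambda\varphi^{*}((j+h)/\lambda)$ without any quantitative slack, whereas (1) demands an extra $\log\binom{j+h}{j}\leq(j+h)\log 2$ on the right-hand side. Here the genuine subadditivity of $\omega$ is essential: translating $\omega(kt)\leq k\omega(t)$ through the Young conjugate produces the sharpened inequality
\beqsn
\lambda\varphi^{*}(s)\ \geq\ s\lambda\log k+k\lambda\varphi^{*}(s/k),
\eeqsn
which, applied with $k=2$ and $s=(j+h)/\lambda$ and combined with the additivity estimate and the elementary $\binom{j+h}{j}\leq 2^{j+h}$, should supply the missing factorial cushion. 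Verifying this cleanly is the technical heart of the proof and the step where I expect the main effort to lie.
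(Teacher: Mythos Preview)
Your treatment of items (3), (4), (6), (7) and (8) is essentially correct and close in spirit to the paper's; item (5) is vague but points in the right direction (the paper quotes a concrete inequality of the form \eqref{015} to make this precise). The genuine problem is your plan for item (1), and it does not go through as written.

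You correctly extract from subadditivity the sharpened inequality
\[
\lambda\varphi^{*}\!\left(\frac{j+h}{\lambda}\right)\ \geq\ (j+h)\log 2\ +\ 2\lambda\,\varphi^{*}\!\left(\frac{j+h}{2\lambda}\right),
\]
but the ``additivity estimate'' you want to combine it with (take $\lambda_{1}=\lambda_{2}=\lambda$, $x=j$, $y=h$) reads
\[
2\lambda\,\varphi^{*}\!\left(\frac{j+h}{2\lambda}\right)\ \leq\ \lambda\varphi^{*}\!\left(\frac{j}{\lambda}\right)+\lambda\varphi^{*}\!\left(\frac{h}{\lambda}\right),
\]
which is an \emph{upper} bound for the very quantity you need to bound from \emph{below}. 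The two displayed inequalities bracket $2\lambda\varphi^{*}((j+h)/(2\lambda))$ from opposite sides and cannot be chained to produce
$\lambda\varphi^{*}((j+h)/\lambda)\geq\log\binom{j+h}{j}+\lambda\varphi^{*}(j/\lambda)+\lambda\varphi^{*}(h/\lambda)$.
A route that does work uses subadditivity of $\omega$ at the level of the dual representation $M_{j,\lambda}=\sup_{\tau\geq 1}\tau^{j}e^{-\lambda\omega(\tau)}$: from $\omega(\tau_{1})+\omega(\tau_{2})\geq\omega(\tau_{1}+\tau_{2})$ and the binomial inequality $\binom{j+h}{j}\tau_{1}^{j}\tau_{2}^{h}\leq(\tau_{1}+\tau_{2})^{j+h}$ one gets
\[
\binom{j+h}{j}\,\frac{\tau_{1}^{j}\tau_{2}^{h}}{e^{\lambda\omega(\tau_{1})+\lambda\omega(\tau_{2})}}\ \leq\ \frac{(\tau_{1}+\tau_{2})^{j+h}}{e^{\lambda\omega(\tau_{1}+\tau_{2})}}\ \leq\ M_{j+h,\lambda},
\]
and taking suprema over $\tau_{1},\tau_{2}$ yields (1). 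This is essentially the argument behind the lemma the paper cites for (1).

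Once (1) is available, item (2) is a one-liner: apply (1) with $h=1$ and use $a_{1,\lambda}=e^{\lambda\varphi^{*}(1/\lambda)}\geq 1$. Your separate attack on (2) via the optimiser $t^{*}$ is both unnecessary and shaky: the bound $t^{*}\geq\log(j+1)$ is not uniform in $\lambda$, since for large $\lambda$ the penalty $\lambda\varphi(t)$ drives $t^{*}$ toward $0$.
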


\begin{proof}
  $(a)$ has been proved in Lema 3.2.3 of \cite{david}.

 The first inequality of $(b)$ follows from $(a)$ since $a_{1,\lambda}=e^{\lambda\varphi^*(1/\lambda)}$.
 
 Iterating the inequality for $0<\lambda\leq1$ and $j\leq\ell$ we get that
 \beqsn
 a_{j,\lambda}\leq e^{-(\ell-j)\lambda\varphi^*(1/\lambda)}a_{\ell,\lambda}
 \leq e^{-(\ell-j)\varphi^*(1)}a_{\ell,\lambda}\leq e^{-(\ell-j)\varphi^*(0)}a_{\ell,\lambda},
 \eeqsn
 since $\varphi^*(s)/s$ and $\varphi^*$ are increasing.
  
  $(c)$ follows from the fact that $\varphi^*(s)/s$ is increasing (cf.
  \cite{BMT}).

  $(d)$ follows from the convexity of $\varphi^*$:
  \beqsn
  a_{j+h,\lambda}=\frac{e^{\lambda\varphi^*\left(\frac{j+h}{\lambda}\right)}}{(j+h)!}\leq
  \frac{j!h!}{(j+h)!}\frac{e^{\frac \lambda2\varphi^*\left(\frac{2j}{\lambda}\right)}}{j!}
  \frac{e^{\frac \lambda2\varphi^*\left(\frac{2h}{\lambda}\right)}}{h!}\\
  =\frac{1}{\binom{j+h}{h}}a_{j,\frac \lambda2}a_{h,\frac \lambda2}
  \leq a_{j,\frac \lambda2}a_{h,\frac \lambda2}.
  \eeqsn

 Point $(e)$ follows from the next property of 
\cite[Prop. 0.1.5(2)(a)]{david}:  for each $y\geq0,$ $n\in\N$, and $\lambda>0$,
  \beqs
  \label{015}
  \lambda L^n\varphi^*\left(\frac{y}{\lambda L^n}\right)+n y
  \leq \lambda\varphi^*\left(\frac y\lambda\right)+\lambda\sum_{h=1}^n L^h,
   \eeqs
  where $L>0$ is such that $\omega(et)\leq L(1+\omega(t))$ for all
  $t\geq0$ (in our case $\omega$ is increasing and subadditive, so that we
  can take $L=3$).

  Indeed, from \eqref{015} with $y=jL^n$ and dividing by $L^n$:
  \beqsn
  \lambda\varphi^*\left(\frac j\lambda\right)+nj\leq
\frac{\lambda}{L^n}
  \varphi^*\left(\frac{j}{\lambda/L^n}\right)+\lambda
  \sum_{h=1}^nL^{h-n}
  \eeqsn
  and therefore
  \beqsn
  \rho^je^{\lambda\varphi^*\left(\frac j\lambda\right)}\leq
  e^{\frac{\lambda}{L^n}\varphi^*\left(\frac{j}{\lambda/L^n}\right)+
\lambda n-nj+j\log\rho}.
  \eeqsn

  Choosing $n_{\rho}:=[\log\rho+1]\in\N$ so that 
$-n_\rho
  +\log\rho\leq0$, for $\lambda'=\lambda/L^{n_{\rho}}$ we thus have that
  \beqs
\label{BJO}
  \rho^je^{\lambda\varphi^*\left(\frac j\lambda\right)}\leq e^{\lambda n_{\rho}}
  e^{\lambda'\varphi^*\left(\frac{j}{\lambda'}\right)}
  \eeqs
  so that $(e)$ is proved.

  In order to prove $(f)$, let us first remark that
  \beqs
  \label{L81}
  \frac{j!}{h!}a_{j-h,\lambda}\leq\frac{(j+r)!}{(h+r)!}a_{j-h,\lambda}
  \eeqs
  since 
$h\leq j$.

  From \eqref{L81} we have that
  \beqsn
  \frac{j!}{h!}a_{j-h,\lambda}\leq&&\frac{(j+r)!}{e^{\lambda\varphi^*
\left(\frac{j+r}{\lambda}\right)}}\cdot
  \frac{e^{\lambda\varphi^*\left(\frac{h+r}{\lambda}\right)}}{(h+r)!}\cdot
  \frac{e^{\lambda\varphi^*\left(\frac{j+r}{\lambda}\right)}}{e^{\lambda\varphi^*
\left(\frac{h+r}{\lambda}\right)}}
  a_{j-h,\lambda}\\
  =&&\frac{a_{h+r,\lambda}\, a_{j-h,\lambda}}{a_{j+r,\lambda}}\cdot
  \frac{e^{\lambda\varphi^*\left(\frac{j+r}{\lambda}\right)}}{e^{\lambda\varphi^*
\left(\frac{h+r}{\lambda}\right)}}
  \leq\frac{e^{\lambda\varphi^*\left(\frac{j+r}{\lambda}\right)}}{e^{\lambda\varphi^*
\left(\frac{h+r}{\lambda}\right)}}
  \eeqsn
  by the already proved point $(a)$.
  Therefore $(f)$ holds true.

  Property $(g)$ follows from the convexity of $\varphi^*$.
  Indeed, from $(a)$
  \beqsn
  e^{\lambda\varphi^*\left(\frac{j}{\lambda}\right)}e^{\lambda\varphi^*
\left(\frac{r+h}{\lambda}\right)}=&&
  a_{j,\lambda}\, a_{r+h,\lambda}\, j!(r+h)!\\
  \leq && a_{j+r+h,\lambda}\, j!(r+h)!
  =e^{\lambda\varphi^*\left(2\frac{j+r+h}{2\lambda}\right)}
  \frac{j!(r+h)!}{(j+r+h)!}\\
  \leq&&e^{\frac \lambda2\varphi^*\left(\frac{j+h}{\lambda/2}\right)+
\frac \lambda2\varphi^*\left(\frac{r}{\lambda/2}\right)}
  \frac{1}{\binom{j+r+h}{j}}\\
  \leq && e^{\frac \lambda2\varphi^*\left(\frac{j+h}{\lambda/2}\right)}
  e^{\frac \lambda2\varphi^*\left(\frac{r}{\lambda/2}\right)}.
  \eeqsn
  
  Let us finally prove $(h)$. We first remark that, by the convexity of $\varphi^*$,
  \beqsn
  2\varphi^*\left(\frac{r+1}{\lambda}\right)=2\varphi^*\left(\frac{r}{2\lambda}
  +\frac{r+2}{2\lambda}\right)
  \leq\varphi^*\left(\frac{r}{\lambda}\right)+\varphi^*\left(\frac{r+2}{\lambda}\right)
  \eeqsn
  i.e.
  \beqsn
  \varphi^*\left(\frac{r+1}{\lambda}\right)-\varphi^*\left(\frac{r}{\lambda}\right)
  \leq\varphi^*\left(\frac{r+2}{\lambda}\right)-\varphi^*\left(\frac{r+1}{\lambda}\right).
  \eeqsn
  Arguing recursively we get
  \beqs
  \label{qr}
  \varphi^*\left(\frac{r+1}{\lambda}\right)-\varphi^*\left(\frac{r}{\lambda}\right)
  \leq\varphi^*\left(\frac{q+1}{\lambda}\right)-\varphi^*\left(\frac{q}{\lambda}\right)
  \eeqs
  for every $q\in\N$ with $q\geq r$.
  
  Clearly \eqref{qr} implies $(h)$ and the proof is complete.
\end{proof}

\begin{Rem}
\begin{em}
Note that we did not use the subadditivity of the weight $\omega$ to
prove points $(c),(d),(e),(h)$ of Proposition~\ref{prop1}.
\end{em}
\end{Rem}

For the proof of Theorem~\ref{th13bis}
we shall follow the ideas of \cite{K3},
so we define, for a domain $\Omega\subseteq\R^n$,  $q\in\N_0,\delta>0$ and $f\in C^\infty(G)$, with $G$
a relatively compact subdomain of $\Omega$,
\beqsn
\|\nabla^qf\|_\delta=\sum_{|\alpha|=q}\|D^\alpha f\|_{L^2(G_\delta)},
\eeqsn
where
\beqsn
G_\delta:=\{x\in G:\ \dist(x,\partial G)>\delta\}
\eeqsn
and $\|\cdot\|_{L^2(G_\delta)}=0$ if $G_\delta=\emptyset$.

If $P=P(x,D)$ is an elliptic linear partial differential operator of
order $m$ with $C^\infty$ coefficients, then the following a priori
estimates, for $\delta,\sigma>0$ and $0\leq r\leq m$, have been
proved in \cite{K2}:
\beqs
\label{K2}
&&\|\nabla^mf\|_{\delta+\sigma}\leq C(\|Pf\|_\sigma+
\delta^{-m}\|f\|_\sigma)\\
\label{K3}
&&\|\nabla^{m-r}f\|_{\delta+\sigma}\leq C\varepsilon^r
(\|\nabla^mf\|_\sigma+(\delta^{-m}+\varepsilon^{-m})\|f\|_\sigma),
\eeqs
for arbitrary $\varepsilon>0$, where the constant $C>0$ depends only on
the operator $P$ and the set $G$.

Then we define the semi-norm $N^{pm}(u)$ by
\beqsn
N^{pm}(u):=\sup_{0<\delta\leq1}\delta^{pm}\|\nabla^{pm}u\|_\delta.
\eeqsn

The following inequality holds:
\begin{Prop}
  \label{prop5bis}
  Let $\Omega\subseteq\R^n$ be a domain and
$P(x,D)$ an elliptic linear partial differential operator of
  order $m$ with coefficients in $\E_{\{\omega\}}(\Omega)$. For $u\in C^\infty(\Omega)$, there exist $k\in\N$ and a positive constant $C_{0}$ such that
  \beqs
\label{9}
   N^{pm}(u)\leq C_{0}\left\{N^{(p-1)m}(Pu)
    +\sum_{q=0}^{p-1}
    \frac{e^{\frac1k\varphi^*(pmk)}}{e^{\frac1k\varphi^*(qmk)}}
    N^{qm}(u)\right\}.
    \eeqs
  for every $p\in\N$.
\end{Prop}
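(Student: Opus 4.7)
The plan is to adapt Komatsu's argument in \cite{K3} to the Roumieu ultradifferentiable setting, by combining the two elliptic a priori estimates \eqref{K2} and \eqref{K3} with the coefficient bounds available in $\E_{\{\omega\}}(\Omega)$ and the combinatorial identities collected in Proposition~\ref{prop1}.

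First, I would choose $k\in\N$ large enough that all coefficients $a_\alpha$ of $P$ belong to $\E_\omega^{1/k}(G')$ for some compact set $G'$ containing $\overline G$; this provides bounds of the form
\[
|\partial^\beta a_\alpha(x)|\le C\, e^{\frac1k\varphi^*(|\beta|k)}
\]
uniformly on $G'$. Fixing $\alpha$ with $|\alpha|=(p-1)m$ and arbitrary $\delta,\sigma>0$, I would apply the elliptic estimate \eqref{K2} to $f=\partial^\alpha u$ and expand $P\partial^\alpha u=\partial^\alpha (Pu)+[P,\partial^\alpha]u$ via Leibniz. The commutator produces terms of the form $\binom{\alpha}{\beta}(\partial^\beta a_\gamma)\,D^{\alpha-\beta+\gamma}u$ with $1\le|\beta|\le|\alpha|$ and $|\gamma|\le m$, that is, derivatives of $u$ of orders at most $pm-1$ multiplied by the coefficient bounds above.

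Next, I would use the interpolation estimate \eqref{K3} to reduce each intermediate order $s:=(p-1)m+|\gamma|-|\beta|$ to adjacent multiples $qm$ of $m$; writing $s=qm+r$ with $0\le r<m$ and applying \eqref{K3} to derivatives of $u$ of order $qm$, each commutator contribution can be bounded by a combination of $\|\nabla^{(q+1)m}u\|_\sigma$ and $\|\nabla^{qm}u\|_\sigma$ with geometric weights in $\delta$ (after choosing the interpolation parameter $\varepsilon$ proportional to $\delta$). Summing over $|\alpha|=(p-1)m$, multiplying by $\delta^{pm}$, taking $\sigma=\delta$ and then $\sup_{0<\delta\le 1}$ converts the left hand side into $N^{pm}(u)$ (absorbing a factor $2^{pm}$ from $\delta+\sigma=2\delta$ into the constant), the principal error term into $N^{(p-1)m}(Pu)$, and the commutator contributions into a finite sum of terms of the shape $(\text{weight})\cdot N^{qm}(u)$ for $0\le q\le p-1$.

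The remaining bookkeeping combines the combinatorial factors $\binom{\alpha}{\beta}$ and the factorials $|\beta|!,|\gamma|!$ coming from Leibniz, the coefficient weights $e^{\frac1k\varphi^*(|\beta|k)}$, and the factorials produced by the passage from $\|\nabla^s u\|$ to $\|\nabla^{qm}u\|$. Using items (1), (6) and (7) of Proposition~\ref{prop1} (which encode the subadditivity of $\omega$ and the convexity of $\varphi^*$) one shows that the weight attached to each $N^{qm}(u)$ telescopes precisely to $e^{\frac1k\varphi^*(pmk)}/e^{\frac1k\varphi^*(qmk)}$. This telescoping is the main obstacle: one must verify that the coefficient weights combine with the Leibniz factorials so that no $|\beta|$-dependent residue survives. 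It forces a possible replacement of $k$ by a larger but fixed integer coming from item (4) of Proposition~\ref{prop1}, whose effect, together with the geometric sums in $\delta$ produced by \eqref{K3}, is absorbed into the final constant $C_0$, yielding \eqref{9}.
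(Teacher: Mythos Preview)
Your overall strategy---apply \eqref{K2} to $\nabla^{(p-1)m}u$, expand the commutator via Leibniz, interpolate with \eqref{K3}, and reorganize using Proposition~\ref{prop1}---is exactly Komatsu's, which the paper follows. But there is a genuine gap in the scaling step, and it is not a bookkeeping detail.

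You take $\sigma=\delta$ in \eqref{K2} and claim that after multiplying by $\delta^{pm}$ and taking $\sup_{0<\delta\le1}$ the left-hand side becomes $N^{pm}(u)$ ``absorbing a factor $2^{pm}$ from $\delta+\sigma=2\delta$ into the constant''. But $2^{pm}$ depends on $p$, whereas \eqref{9} requires $C_0$ independent of $p$. This factor also multiplies $N^{(p-1)m}(Pu)$, so it cannot be hidden in the exponential weights via Proposition~\ref{prop1}(5); and when the inequality is iterated in Proposition~\ref{prop10bis} the defect compounds super-exponentially. The paper (following \cite{K3}) avoids this by writing
\[
N^{(p+1)m}(u)=\sup_{(p+2)\delta\le1}\bigl((p+2)\delta\bigr)^{(p+1)m}\|\nabla^{(p+1)m}u\|_{(p+2)\delta}
\]
and applying \eqref{K2} with $\sigma=(p+1)\delta$; the point is that $\bigl(\tfrac{p+2}{p}\bigr)^{(p+1)m}\le 9^m$ uniformly in $p$, which replaces your $2^{pm}$. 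For the intermediate commutator terms one needs a further $p$-dependent rescaling $(p+1)\delta=(q+1)\delta'$ together with the estimate $(p\delta)^{(p+1)m}\le(2e)^m(q\delta')^{(q+1)m}$ to land on $N^{qm}(u)$ with a uniform constant; your choice $\sigma=\delta$ gives no such mechanism.

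A second point you gloss over: the highest-order commutator terms (small $|\beta|$) produce, after \eqref{K3}, contributions of the \emph{same} order $pm$ as the left-hand side, so your sum cannot stop at $q\le p-1$ without an absorption step. In the paper this is handled by choosing $\varepsilon=(pm)^{-1}(2mCC_2)^{-1/t}$ in \eqref{K3}, not $\varepsilon\propto\delta$, so that the resulting coefficient in front of $N^{(p+1)m}(u)$ is exactly $\tfrac12$ and can be moved to the left.
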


\begin{proof} 
  By definition of the semi-norm $N^{(p+1)m}(u)$ and by \eqref{K2} we have
  \beqs
  \nonumber
  N^{(p+1)m}(u)=&&\sup_{(p+2)\delta\leq1}((p+2)\delta)^{(p+1)m}
  \|\nabla^{(p+1)m}u\|_{(p+2)\delta}\\
  \nonumber
  \leq&&\sup_{(p+2)\delta\leq1}\left(\frac{p+2}{p}\right)^{(p+1)m}
  (p\delta)^{(p+1)m}C(\|P\nabla^{pm}u\|_{(p+1)\delta}\\ &&\qquad+\delta^{-m}
  \|\nabla^{pm}u\|_{(p+1)\delta})\nonumber\\
  \label{eq1}
  \leq&&9^mC\sup_{(p+2)\delta\leq1}\{(p\delta)^{(p+1)m}
\|P\nabla^{pm}u\|_{(p+1)\delta}
\nonumber  \\ &&\qquad+p^m(p\delta)^{pm}\|\nabla^{pm}u\|_{(p+1)\delta}\},
  \eeqs
  since $\left(\frac{p+2}{p}\right)^{p+1}\leq9$.

We set $P^{[r]}:=\sum_{|\alpha|=r}\sup_G|D_x^\alpha P|.$ Since $\|\cdot\|_{(p+1)\delta}\leq\|\cdot\|_{p\delta}$ and
  $p^m(pm)!\leq  ((p+1)m)!$, from \eqref{eq1} and Leibniz' formula we get:
  \beqs
  \nonumber
  N^{(p+1)m}(u)\leq&&9^mC\sup_{(p+2)\delta\leq1}\Big\{(p\delta)^{(p+1)m}
  \Big[
    \|\nabla^{pm}Pu\|_{(p+1)\delta}\\
\nonumber
&&+\sum_{r=1}^{pm}\binom{pm}{r}
    \|P^{[r]}\nabla^{pm-r}u\|_{(p+1)\delta}\Big]
  +p^m(p\delta)^{pm}\|\nabla^{pm}u\|_{p\delta}\Big\}\\
  \nonumber
  \leq&&9^mC\sup_{(p+2)\delta\leq1}\left\{\left(\frac{p}{p+1}\right)^{pm}
  [(p+1)\delta]^{pm}\right.\times\\ 
  &&\nonumber\times \left.\left(\frac{p}{p+2}\right)^m[(p+2)\delta]^m
  \|\nabla^{pm}Pu\|_{(p+1)\delta}\right.\\
  \nonumber
  &&\left. +(p\delta)^{(p+1)m}\sum_{r=1}^{pm}\binom{pm}{r}
\|P^{[r]}\nabla^{pm-r}u\|_{(p+1)\delta}\right.\\
\nonumber&& \left. +\frac{((p+1)m)!}{(pm)!}N^{pm}(u)\right\}\\
   \leq&&9^mC\Big\{N^{pm}(Pu)+\nonumber\\ \nonumber
&&   \sup_{(p+2)\delta\leq1}(p\delta)^{(p+1)m}\sum_{r=1}^{pm}\binom{pm}{r}
  \|P^{[r]}\nabla^{pm-r}u\|_{(p+1)\delta}\\
  \label{5}
  &&+\frac{((p+1)m)!}{(pm)!}N^{pm}(u)\Big\}.
  \eeqs

Taking into account that the coefficients of $P(x,D)$ are in 
$\E_{\{\omega\}}(\Omega)$, we can write the following estimates, 
for $(p+2)\delta\leq1$ and for some $k\in\N$ and
$c>0$:
\beqs
\label{eq8}
\sum_{r=1}^{pm}\binom{pm}{r}&&\|P^{[r]}\nabla^{pm-r}u\|_{(p+1)\delta}\leq
c\sum_{r=1}^{pm}\binom{pm}{r}e^{\frac1k\varphi^*(rk)}
\sum_{s=0}^m\|\nabla^{pm+s-r}u\|_{(p+1)\delta}\nonumber\\
\label{eq9}
\leq&&c\sum_{r=1}^{pm}\frac{(pm)!}{(pm-r)!}a_{r,\frac1k}
\sum_{s=0}^m\|\nabla^{pm+s-r}u\|_{(p+1)\delta}.
\eeqs

By the change of indexes $r=(p-q)m+t$ we obtain that (cf. also \cite{K3})
\beqs
\nonumber
\sum_{r=1}^{pm}\binom{pm}{r}\|P^{[r]}\nabla^{pm-r}u\|_{(p+1)\delta}
\leq&&c(m+1)\sum_{q=1}^p\sum_{t=1}^m\frac{(pm)!}{(qm-t)!}a_{(p-q)m+t,\frac1k}\times\\
\nonumber&&\qquad \times
\|\nabla^{(q+1)m-t}u\|_{(p+1)\delta}\\
\nonumber
&&+c m\sum_{t=1}^m(pm)!a_{pm,\frac1k}\|\nabla^{m-t}u\|_{(p+1)\delta}\\
\nonumber
=&&c(m+1)\sum_{t=1}^m\frac{(pm)!}{(pm-t)!}a_{t,\frac1k}\times\\
\nonumber&&\qquad \times\|\nabla^{(p+1)m-t}
u\|_{(p+1)\delta}\\
\nonumber
&&+c(m+1)\sum_{q=1}^{p-1}\sum_{t=1}^m\frac{(pm)!}{(qm-t)!}a_{(p-q)m+t,\frac1k}\times\\
\nonumber &&\qquad \times\|\nabla^{(q+1)m-t}u\|_{(p+1)\delta}\\
\label{eq6}
&&+cm\sum_{t=1}^m(pm)!a_{pm,\frac1k}\|\nabla^{m-t}u\|_{(p+1)\delta}.
\eeqs

From \eqref{eq6}, by properties $(b)$ and $(d)$ of Proposition~\ref{prop1}
we get:
\beqs
\label{eq3}
\sum_{r=1}^{pm}\binom{pm}{r}\|P^{[r]}\nabla^{pm-r}u\|_{(p+1)\delta}\leq
S_1+S_2+S_3
\eeqs
with
\beqsn
&&S_1:=c(m+1)\max\{1,e^{-(m-1)\varphi^*(0)}\}
\sum_{t=1}^m\frac{(pm)!}{(pm-t)!}a_{m,\frac1k}\|\nabla^{(p+1)m-t}
u\|_{(p+1)\delta}\\
&&S_2:=c\max\{1,e^{-(m-1)\varphi^*(0)}\}a_{m,\frac{1}{2k}}(m+1)\sum_{q=1}^{p-1}\sum_{t=1}^m\frac{(pm)!}{(qm-t)!}
a_{(p-q)m,\frac{1}{2k}}\|\nabla^{(q+1)m-t}u\|_{(p+1)\delta}\\
&&S_3:=cm\sum_{t=1}^m(pm)!a_{pm,\frac1k}\|\nabla^{m-t}u\|_{(p+1)\delta},
\eeqsn 
since $a_{t,\frac1k}\leq e^{-(m-t)\varphi^*(0)}a_{m,\frac1k}
\leq\max\{1,e^{-(m-1)\varphi^*(0)}\}a_{m,\frac1k}$.

By property $(c)$ of Proposition~\ref{prop1} and by \eqref{K3}, setting
\beqsn
C_{2}:=9^mcC(m+1)\max\{1,e^{-(m-1)\varphi^*(0)}\}a_{m,\frac{1}{2k}},
\eeqsn
we have the estimate
\beqsn
9^mC(p\delta)^{(p+1)m}S_1\leq&&C_{2}\sum_{t=1}^m\frac{(pm)!}{(pm-t)!}
(p\delta)^{(p+1)m}\|\nabla^{(p+1)m-t}u\|_{(p+1)\delta}\\
\leq&&C_{2}C\sum_{t=1}^m(pm)^t(p\delta)^{(p+1)m}\varepsilon^t
(\|\nabla^{(p+1)m}u\|_{p\delta}\\&&\nonumber\qquad+(\delta^{-m}+\varepsilon^{-m})
\|\nabla^{pm}u\|_{p\delta})\\
=&&C_{2}C\sum_{t=1}^m(pm)^t\varepsilon^t\big\{(p\delta)^{(p+1)m}\|
\nabla^{(p+1)m}u\|_{p\delta}\\
&&+(p^m+(p\delta)^m\varepsilon^{-m})(p\delta)^{pm}\|\nabla^{pm}u\|_{p\delta}\big\},
\eeqsn
since $(pm)!\leq (pm-t)!(pm)^t$.

Therefore, for $\varepsilon=(pm)^{-1}(2mCC_{2})^{-1/t}$ and 
$(p+2)\delta\leq1$:
\beqs
\nonumber
9^mC(p\delta)^{(p+1)m}S_1\leq&&
\sum_{t=1}^m\frac{1}{2m}\Big\{N^{(p+1)m}(u)\\
\nonumber
&&+\Big(p^m+\Big(\frac{p}{p+2}\Big)^m[(p+2)\delta]^m\times \\&&\nonumber\qquad\times(pm)^m
(2mCC_{2})^{m/t}\Big)N^{pm}(u)\Big\}\\
\nonumber
\leq&&\sum_{t=1}^m\frac{1}{2m}\left\{N^{(p+1)m}(u)\right.\\ \nonumber&&\qquad\left.+
\left(p^m+(pm)^m(2mCC_{2})^{m/t}\right)N^{pm}(u)\right\}\\
\nonumber
\leq&&\frac12N^{(p+1)m}(u)+C_{3}p^mN^{pm}(u)\\
\label{2}
\leq&&\frac12N^{(p+1)m}(u)+C_{3}\frac{((p+1)m)!}{(pm)!}N^{pm}(u)
\eeqs
for some $C_{3}>0$, because of $p^m(pm)!\leq((p+1)m)!$.

In order to estimate $S_2$, let us first prove the following estimate, 
for $1\leq q\leq p-1$,
$(p+1)\delta$$=$$(q+1)\delta'$ 
and $(p+2)\delta\leq1$:
\beqs
\label{lemma6}
(p\delta)^{(p+1)m}\leq(2e)^m(q\delta')^{(q+1)m}.
\eeqs
Indeed,
\beqsn
(p\delta)^{(p+1)m}=&&\frac{p^{(p+1)m}\delta^{(p+1)m}}{q^{(q+1)m}
\left(\frac{p+1}{q+1}\right)^{(q+1)m}
\delta^{(q+1)m}}\cdot(q\delta')^{(q+1)m}\\
=&&\left(\frac{p}{p+1}\frac{q+1}{q}\right)^{(q+1)m}(p\delta)^{(p-q)m}
(q\delta')^{(q+1)m}\\
\leq&&\left(1+\frac 1q\right)^{qm}\left(1+\frac 1q\right)^m
\left(\frac{p}{p+2}\right)^{(p-q)m}\times\\\nonumber&&\qquad\times[(p+2)\delta]^{(p-q)m}
(q\delta')^{(q+1)m}\\
\leq&&e^m2^m(q\delta')^{(q+1)m}.
\eeqsn
Therefore \eqref{lemma6} is proved and,
for $1\leq q\leq p-1$, $(p+1)\delta=(q+1)\delta'$ and $(p+2)\delta\leq1$:
\beqsn
&& 9^mC(p\delta)^{(p+1)m}S_2\\
\leq&& C_{2}\sum_{q=1}^{p-1}\sum_{t=1}^m\frac{(pm)!}{(qm-t)!}a_{(p-q)m,\frac{1}{2k}}
(p\delta)^{(p+1)m}\|\nabla^{(q+1)m-t}u\|_{(p+1)\delta}\\
\leq&&(2e)^m\sum_{q=1}^{p-1}\frac{(pm)!}{(qm)!}a_{(p-q)m,\frac{1}{2k}}\,\times\\
&&\times
C_{2}\sum_{t=1}^m\frac{(qm)!}{(qm-t)!}(q\delta')^{(q+1)m}
\|\nabla^{(q+1)m-t}u\|_{(q+1)\delta'}.
\eeqsn

By \eqref{2} with $q$ and $\delta'$ instead of $p$ and $\delta$ respectively,
and because of properties $(f)$ and $(b)$ of Proposition~\ref{prop1}
we finally get the following estimate for $S_2$:
\beqs
\nonumber
&&9^mC(p\delta)^{(p+1)m}S_2\\\nonumber
&& \leq D\sum_{q=1}^{p-1}\frac{(pm)!}{(qm)!}a_{(p-q)m,\frac{1}{2k}}
\Big\{\frac12 N^{(q+1)m}(u)+C'_{3}
\frac{((q+1)m)!}{(qm)!}N^{qm}(u)\Big\}\\
\nonumber
&&\leq D'\sum_{q=1}^{p-1}\left(
\frac{e^{\frac{1}{2k}\varphi^*\left(2(p+1)mk\right)}}{e^{\frac{1}{2k}\varphi^*
\left(2(q+1)mk
\right)}}N^{(q+1)m}+e^{-m\varphi^*(0)}
\frac{e^{\frac{1}{2k}\varphi^*\left(2(p+1)mk\right)}}{e^{\frac{1}{2k}
\varphi^*\left(2qmk
\right)}}N^{qm}(u)\right)\\
\label{6}
&&\leq (1+e^{-m\varphi^*(0)})D'\sum_{q=1}^{p-1}
\frac{e^{\frac{1}{2k}\varphi^*\left(2(p+1)mk\right)}}{e^{\frac{1}{2k}\varphi^*\left(
2qmk
\right)}}N^{qm}(u)
+D'
  \frac{e^{\frac{1}{2k}\varphi^*\left(2(p+1)mk\right)}}{e^{\frac{1}{2k}\varphi^*\left(2
pmk
\right)}}N^{pm}(u)
\eeqs
for some $C'_{3}, D,D'>0$.

Let us now estimate $S_3$. By \eqref{K3} with $\varepsilon=1$ and because of
properties $(e)$, $(f)$ (with $h=0$) and $(b)$ of 
Proposition~\ref{prop1}, for $(p+2)\delta\leq1$:
\beqs
\nonumber
9^mC(p\delta)^{(p+1)m}S_3
\leq&& C_{2}\max\{1,e^{-(m+1)\varphi^*(0)}\}
\sum_{t=1}^m(pm)!a_{pm,\frac1k}(p\delta)^{(p+1)m}
\|\nabla^{m-t}u\|_{(p+1)\delta}\\
\nonumber
\leq &&CC_{2}\max\{1,e^{-(m+1)\varphi^*(0)}\}\\
\nonumber
&&\cdot\sum_{t=1}^m(pm)!(p\delta)^{pm}a_{pm,\frac1k}\big((p\delta)^m
\|\nabla^mu\|_{p\delta}+p^m(1+\delta^m)\|u\|_{p\delta}\big)\\
\nonumber
\leq &&CC_{2}\max\{1,e^{-(m+1)\varphi^*(0)}\}\sum_{t=1}^m(pm)!a_{pm,\frac1k}\left(N^m(u)+2p^m
N^0(u)\right)\\
\nonumber
\leq&& CC_{2}\max\{1,e^{-(m+1)\varphi^*(0)}\}m(pm)!a_{pm,\frac{1}{k}}N^m(u)\\
\nonumber
&&+2CC_{2}\max\{1,e^{-(m+1)\varphi^*(0)}\}m((p+1)m)!a_{pm,\frac{1}{k}}N^0(u)\\
\nonumber
\leq &&CC_{2}\max\{1,e^{-(m+1)\varphi^*(0)}\}m
\frac{e^{\frac{1}{k}\varphi^*\left((p+1)mk\right)}}{e^{\frac{1}{k}
\varphi^*\left(mk\right)}}
N^m(u)\\
\nonumber
&&+2CC_{2}\max\{1,e^{-(2m+1)\varphi^*(0)}\}m((p+1)m)!a_{(p+1)m,\frac{1}{k}}N^0(u)\\
\label{7}
&&\leq \tilde{D}e^{\frac{1}{k}\varphi^*\left((p+1)mk\right)}\left(N^m(u)+N^0(u)\right),
\eeqs
for some $\tilde{D}>0$, since 
$
a_{m,\frac{1}{2k}}\geq e^{m\varphi^*(0)}a_{0,\frac{1}{2k}}
=e^{m\varphi^*(0)}e^{\frac{1}{2k}\varphi^*(0)}
\geq\min\{1,e^{(m+1)\varphi^*(0)}\},
$
and 
$a_{pm,\frac1k}\leq e^{-m\varphi^*(0)}a_{(p+1)m,\frac1k}$.

Substituting \eqref{2}, \eqref{6} and \eqref{7} in \eqref{eq3} and then in
\eqref{5} and applying
$(b)$ of Proposition~\ref{prop1}, we finally get:
\beqsn
N^{(p+1)m}(u)\leq C_5N^{pm}(Pu)+\frac12 N^{(p+1)m}(u)
+C_{5}\sum_{q=0}^p
\frac{e^{\frac{1}{k'}\varphi^*\left((p+1)mk'\right)}}{e^{\frac{1}{k'}
\varphi^*\left(qmk'\right)}}
N^{qm}(u),
\eeqsn
for some $k'\in\N$ and $C_{5}>0$, concluding the proof.
\end{proof}

We shall also need, in the following, the next result:
\begin{Prop}
  \label{prop10bis}
  Let $P(x,D)$ be an elliptic linear partial differential operator of
  order $m$ with coefficients in $\E_{\{\omega\}}(\Omega)$. For $u\in C^\infty(\Omega)$, there are 
  $k\in\N$ and a positive constant $C_{1}>0$ such that
  \beqs
\label{8}
  N^{pm}(u)\leq C_{1}^p\sum_{q=0}^p\binom pq
  \frac{e^{\frac1k\varphi^*(pmk)}}{e^{\frac1k\varphi^*(qmk)}}
  N^0(P^qu)
  \eeqs
    for every $p\in\N_0$.
\end{Prop}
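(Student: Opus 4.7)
The plan is to prove \eqref{8} by induction on $p\in\N_0$, using Proposition~\ref{prop5bis} as the one-step inequality. The base case $p=0$ is immediate since both sides equal $N^0(u)$ (taking $C_1\geq 1$). For the inductive step, I assume \eqref{8} holds for every smooth function in place of $u$ and for every index $q$ with $0\leq q\leq p-1$, with the same $k$ given by Proposition~\ref{prop5bis} applied to $u$ (enlarging $k$ if necessary to simultaneously serve for $Pu, P^2u,\ldots$; note that these all share coefficient data from $P$).

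Starting from Proposition~\ref{prop5bis}, I apply the inductive hypothesis in two places. First, to $Pu$ at level $p-1$, giving
\[
N^{(p-1)m}(Pu)\leq C_1^{p-1}\sum_{r=1}^{p}\binom{p-1}{r-1}\frac{e^{\frac1k\varphi^*((p-1)mk)}}{e^{\frac1k\varphi^*((r-1)mk)}}N^0(P^r u)
\]
after the index shift $q\mapsto r-1$. Second, to $u$ itself at each level $q\leq p-1$, expanding each $N^{qm}(u)$ in terms of $N^0(P^s u)$ with $0\leq s\leq q$. Substituting into the inequality of Proposition~\ref{prop5bis} and swapping the order of summation produces a bound of the form $\sum_{s=0}^{p} A_s\frac{e^{\frac1k\varphi^*(pmk)}}{e^{\frac1k\varphi^*(smk)}}N^0(P^s u)$, and the goal becomes $A_s\leq C_1^p\binom{p}{s}$ for each $s$.

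To reach this form I first convert the ratios $e^{\frac1k\varphi^*((p-1)mk)}/e^{\frac1k\varphi^*((r-1)mk)}$ coming from the $Pu$-term into $e^{\frac1k\varphi^*(pmk)}/e^{\frac1k\varphi^*(rmk)}$; since $r\leq p$, this is exactly the monotonicity of consecutive differences of $\varphi^*$, i.e.\ a telescoped application of property $(8)$ of Proposition~\ref{prop1}, which is just convexity of $\varphi^*$. After this replacement, the coefficient $A_s$ becomes
\[
A_s = C_0 C_1^{p-1}\binom{p-1}{s-1}\mathbb{1}_{s\geq 1} + C_0\sum_{q=s}^{p-1}C_1^q\binom{q}{s}\mathbb{1}_{s\leq p-1}.
\]

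The remaining step is the combinatorial estimate. I use that $\binom{q}{s}$ is non-decreasing in $q$ for $q\geq s$, so $\binom{q}{s}\leq\binom{p}{s}$ and $\binom{p-1}{s-1}\leq\binom{p}{s}$; then the geometric sum gives $\sum_{q=s}^{p-1}C_1^q\leq C_1^p/(C_1-1)$, whence
\[
A_s\leq\Bigl(\tfrac{C_0}{C_1}+\tfrac{C_0}{C_1-1}\Bigr)C_1^p\binom{p}{s}.
\]
Choosing $C_1$ large enough so that $C_0/C_1+C_0/(C_1-1)\leq 1$ (for instance $C_1\geq 3C_0+1$, and also $C_1\geq 1$ for the base case) completes the induction. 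The main obstacle is purely bookkeeping: one has to be careful that the $Pu$-contribution and the $u$-contributions land in the same schematic form $\binom{p}{s}R_{p,s}N^0(P^s u)$, and this is exactly what property $(8)$ of Proposition~\ref{prop1} (convexity of $\varphi^*$) and the monotonicity $\binom{q}{s}\leq\binom{p}{s}$ deliver.
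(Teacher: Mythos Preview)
Your induction is correct and yields the proposition, but the route differs from the paper's. You apply the inductive hypothesis immediately to \emph{both} terms on the right of \eqref{9}: to $N^{(p-1)m}(Pu)$ at level $p-1$, and to each $N^{qm}(u)$ at level $q\le p-1$; then you bound the resulting double sum using $\binom{q}{s}\le\binom{p}{s}$ and a geometric series, paying with a constant $C_1\approx 3C_0$. The paper instead first \emph{iterates} \eqref{9} $p$ times (substituting each $N^{qm}(u)$ back into \eqref{9} applied at level $q$) until only terms $N^{qm}(Pu)$, $0\le q\le p-1$, and a single $N^0(u)$ remain; only then is the inductive hypothesis applied, to $Pu$. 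After the same convexity step (property $(8)$ of Proposition~\ref{prop1}) the inner sum $\sum_{q=r}^{p-1}\binom{q}{r}$ collapses via the hockey-stick identity to $\binom{p}{r+1}$, giving the sharp value $C_1=C_0+1$. Your argument is shorter and avoids the recursive substitution; the paper's is a bit longer but produces the exact binomial coefficients rather than an upper bound, at the cost of tracking one more telescoping. Since any $C_1$ suffices for the application in Theorem~\ref{th13bis}, both arguments are adequate.
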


\begin{proof}
  Let us proceed by induction on $p$.

  For $p=0$ it's trivial. Let us assume \eqref{8} to be true for
  $0,1,\ldots,p-1$ and let us prove it for $p$.

  Applying \eqref{9} for $q\in\{1,\ldots,p-1\}$ instead of $p$, we have that
  \beqsn
  &&N^m(u)\leq  C_{0}\left\{N^0(Pu)+e^{\frac1k\varphi^*\left(mk\right)}N^0(u)
  \right\}\\
  &&\ \vdots\\
  &&N^{(p-1)m}(u)\leq C_{0}\left\{N^{(p-2)m}(Pu)+\sum_{q=0}^{p-2}
  \frac{e^{\frac1k\varphi^*\left((p-1)mk\right)}}{e^{\frac1k\varphi^*\left(qmk\right)}}
  N^{qm}(u)\right\}.
  \eeqsn
Substituting in \eqref{9}:
  \beqsn
  && N^{pm}(u)\\\nonumber&&\leq C_{0}\bigg\{N^{(p-1)m}(Pu)+
  \frac{e^{\frac1k\varphi^*\left(pmk\right)}}{e^{\frac1k\varphi^*\left((p-1)mk\right)}}
  N^{(p-1)m}(u)+\ldots+e^{\frac1k\varphi^*\left(pmk\right)}N^0(u)\bigg\}\\
  &&\leq C_{0}\bigg\{N^{(p-1)m}(Pu)+
  \frac{e^{\frac1k\varphi^*\left(pmk\right)}}{e^{\frac1k\varphi^*\left((p-1)mk\right)}}
  C_{0}\bigg[N^{(p-2)m}(Pu)\\&&\nonumber\qquad+
  \frac{e^{\frac1k\varphi^*\left((p-1)mk
\right)}}{e^{\frac1k\varphi^*\left((p-2)mk\right)}}
  N^{(p-2)m}(u)+\\
  &&\qquad\ldots+e^{\frac1k\varphi^*\left((p-1)mk\right)}N^0(u)\bigg]
  +  \ldots+e^{\frac1k\varphi^*\left(pmk\right)}N^0(u)\bigg\}\\
  &&\leq C_{0}N^{(p-1)m}(Pu)+C_{0}^2
  \frac{e^{\frac1k\varphi^*\left(pmk\right)}}{e^{\frac1k\varphi^*\left((p-1)mk\right)}}
  N^{(p-2)m}(Pu)\\&&\qquad  +C_{0}^2
  \frac{e^{\frac1k\varphi^*\left(pmk\right)}}{e^{\frac1k\varphi^*\left((p-2)mk\right)}}
  N^{(p-2)m}(u)+
  \ldots+C_{0}(C_{0}+1)e^{\frac1k\varphi^*\left(pmk\right)}N^0(u)\\
&&\vdots\ \\
 &&\leq \sum_{q=0}^{p-1}
  \frac{e^{\frac1k\varphi^*\left(pmk\right)}}{e^{\frac1k\varphi^*\left((q+1)mk\right)}}
  C_{0}^{p-q}N^{qm}(Pu)+
  (C_{0}+1)^pe^{\frac1k\varphi^*\left(pmk\right)}N^0(u)\\
 &&\leq \sum_{q=0}^{p-1}
  \frac{e^{\frac1k\varphi^*\left(pmk\right)}}{e^{\frac1k\varphi^*\left((q+1)mk\right)}}
  C_{1}^{p-q}N^{qm}(Pu)+
  C_{1}^pe^{\frac1k\varphi^*\left(pmk\right)}N^0(u)
  \eeqsn
  with $C_{1}:=C_{0}+1$.

  Therefore, by the induction assumption
  and because of property $(h)$ of Proposition~\ref{prop1},
  \beqs
  \nonumber
  N^{pm}(u)\leq&&\sum_{q=0}^{p-1}
  \frac{e^{\frac1k\varphi^*\left(pmk\right)}}{e^{\frac1k\varphi^*
\left((q+1)mk\right)}}
  C_{1}^{p-q}C_{1}^q\sum_{r=0}^q\binom qr
  \frac{e^{\frac1k\varphi^*\left(qmk\right)}}{e^{\frac1k\varphi^*\left(rmk
\right)}}
  N^0(P^rPu)\\
  \nonumber
  &&+C_{1}^pe^{\frac1k\varphi^*\left(pmk\right)}N^0(u)\\
  \label{eq5}
  \leq&&C_{1}^p\sum_{r=0}^{p-1}\sum_{q=r}^{p-1}
  \frac{e^{\frac1k\varphi^*\left(pmk\right)}}{e^{\frac1k\varphi^*\left((r+1)mk\right)}}
  \binom qr N^0(P^{r+1}u)\nonumber\\&&\qquad 
+C_{1}^pe^{\frac1k\varphi^*\left(pmk\right)}N^0(u).
\eeqs
Let us now remark that $\sum_{q=r}^{p-1}\binom qr=\binom{p}{r+1}$ and hence
substituting in \eqref{eq5}, we finally have:
\beqsn
N^{pm}(u)\leq&&
C_{1}^p\sum_{r=0}^{p-1}\binom{p}{r+1}
\frac{e^{\frac1k\varphi^*\left(pmk\right)}}{e^{\frac1k\varphi^*\left((r+1)mk\right)}}
N^0(P^{r+1}u)
+C_{1}^pe^{\frac1k\varphi^*\left(pmk\right)}N^0(u)\\
=&&C_{1}^p\sum_{r'=0}^p\binom{p}{r'}
\frac{e^{\frac1k\varphi^*\left(pmk\right)}}{e^{\frac1k\varphi^*\left(r'mk\right)}}
N^0(P^{r'}u),
\eeqsn
so that \eqref{8} is valid with $C_{1}=1+C_{0}$.
\end{proof}

\section{Proof of Theorems~\ref{th13bis} and \ref{th13}}

We can now proceed with the proof of Theorem~\ref{th13bis}, where formula
\eqref{15} here below has been modified with respect to formula (3.3) in the first version \cite{BJ}
of the present paper, because of a remark of
Hoepfner and Rampazo who pointed out very recently in \cite{HR} that the dependence on $q$ of the constant $C_q$ in such a formula of \cite{BJ} is crucial in the induction process and that this fact could ruin the induction argument. However, the original proof in \cite{BJ}, which was based on Komatsu~\cite{K3}, is easy to repair (using again the original ideas of \cite{K3}). Below, we clarify the induction argument presenting an alternative, but very similar, proof of the induction argument, using an induction hypothesis adapted to this setting, analogous to that of formula (6) in Komatsu~\cite{K3}. 

\begin{proof}[Proof of Theorem~\ref{th13bis}] 

Let us first prove that if $P(x,D)$ is elliptic then
$\E^P_{\{\omega\}}(\Omega)\subseteq\E_{\{\omega\}}(\Omega)$.

Let $u\in C^\infty(\Omega)$ satisfy \eqref{roumieu} for every 
$K\subset\subset\Omega$. In particular it satisfies \eqref{roumieu} for every
relatively compact subdomain
$G\subset\Omega$. 
From Proposition~\ref{prop10bis}, for every fixed $\delta>0$ and for all 
$p\in\N_0$
\beqs
\nonumber
\|\nabla^{pm}u\|_\delta\leq&&\delta^{-pm}N^{pm}(u)
\leq\delta^{-pm}C_{1}^p\sum_{q=0}^p\binom pq
\frac{e^{\frac1k\varphi^*\left(pmk\right)}}{e^{\frac1k\varphi^*\left(qmk\right)}}
N^0(P^qu)\\
\nonumber
\leq&&\delta^{-pm}C_{1}^p\sum_{q=0}^p\binom pq
\frac{e^{\frac1k\varphi^*\left(pmk\right)}}{e^{\frac1k\varphi^*\left(qmk\right)}}
\|P^qu\|_{L^2(G)}\\
\nonumber
\leq&&\delta^{-pm}C_{1}^p\sum_{q=0}^p\binom pq
\frac{e^{\frac1k\varphi^*\left(pmk\right)}}{e^{\frac1k\varphi^*\left(qmk\right)}}
ce^{\frac1k\varphi^*\left(qmk\right)}\\
\nonumber
\leq&&c(\delta^{-1}C_{1}^{1/m}2^{1/m})^{pm}
e^{\frac1k\varphi^*\left(pmk\right)}\\
\label{12}
\leq&&cD_{\delta}\,e^{\frac{1}{k'}\varphi^*\left(pmk'\right)}
=\tilde{C}e^{\frac{1}{k'}\varphi^*\left(pmk'\right)}
\eeqs
for some $k'\in\N$, $D_{\delta}\,, \tilde{C}>0$, because of $(e)$ of
Proposition~\ref{prop1}.

By \eqref{K3} (with $\sigma=\delta$, $\varepsilon=1$, $f=\nabla^{pm}u$), and by
\eqref{12},
for all $1\leq t\leq m-1$, $t'=m-t$, $q=pm+t$ we have, by the convexity of 
$\varphi^*$:
\beqs
\nonumber
\|\nabla^qu\|_{2\delta}=&&\|\nabla^{pm+t}u\|_{2\delta}=
\|\nabla^{m-t'}\nabla^{pm}u\|_{2\delta}\\
\nonumber
\leq&&C\left(\|\nabla^{(p+1)m}u\|_\delta+(\delta^{-m}+1)\|\nabla^{pm}u\|_\delta
\right)\\
\nonumber
\leq&&C\tilde{C}\left[e^{\frac{1}{k'}\varphi^*\left((p+1)mk'\right)}
  +(\delta^{-m}+1)e^{\frac{1}{k'}\varphi^*\left(pmk'\right)}\right]\\
\nonumber
\leq&&C\tilde{C}(2+\delta^{-m})e^{\frac{1}{k'}\varphi^*\left(((p+1)m+t)k'\right)}\\
\nonumber
\leq&&C\tilde{C}(2+\delta^{-m})e^{\frac{1}{2k'}\varphi^*\left(2(pm+t)k'\right)}
e^{\frac{1}{2k'}\varphi^*\left(2mk'\right)}\\
\label{13}
=&&C_{\delta}e^{\frac{1}{k''}\varphi^*\left(qk''\right)}
\eeqs
for $C_{\delta}=C\tilde{C}(2+\delta^{-m})e^{\frac{1}{2k'}\varphi^*\left(2mk'\right)}$
and $k''=2k'$.

From \eqref{12} and \eqref{13}, and by Sobolev inequality 
(cf. \cite[Lemma 2.5]{KG}), we thus have that 
$u\in\E_{\{\omega\}}(G_{2\delta})$
for every fixed $\delta>0$ and hence $u\in\E_{\{\omega\}}(\Omega)$.

Let us now show (i).
Let $u\in\E_{\{\omega\}}(\Omega)$ and prove by induction on $p$ that 
for every $K\subset\subset\Omega$
there 
exists $k\in\N$ and $B>1$ such that for every  $p,q\in\N_0$ we have
\beqs
\label{15}
\|\nabla^qP^pu\|_{L^2(K)}\leq B^{q+pm+p+1}e^{\frac1k\varphi^*\left((q+pm)k\right)}.
\eeqs

Indeed, for $p=0$, formula  \eqref{15} is valid because $u\in\E_{\{\omega\}}(\Omega)$.
Assume that \eqref{15} is true for $p$, and all $q\in\N_0$, and prove it
for $p+1$. Since the coefficients of $P$ are in $\E_{\{\omega\}}(\Omega)$ we have
\beqs
\nonumber
&&\|\nabla^qP^{p+1}u\|_{L^2(K)}=\sum_{r=0}^q\binom qr\|P^{[r]}\nabla^{q-r}P^pu\|_{L^2(K)}\\
\nonumber
&&\quad\leq\sum_{r=0}^q\binom qr A^{r+1}e^{\frac1k\varphi^*\left(rk\right)}
\sum_{s=0}^m\|\nabla^{q+s-r}(P^pu)\|_{L^2(K)}\\
\nonumber
&&\quad =A^{r+1}\sum_{r=0}^q\frac{q!}{(q-r)!}a_{r,\frac1k}\|\nabla^{q+m-r}(P^pu)\|_{L^2(K)}\\
\label{eq7}
&&\ \ \ \ \ \quad +A^{r+1}\sum_{r=0}^q
\frac{q!}{r!(q-r)!}e^{\frac1k\varphi^*\left(rk\right)}
\sum_{s=0}^{m-1}\|\nabla^{q+s-r}(P^pu)\|_{L^2(K)},
\eeqs
for some constant $A>0$ which depends only on the coffeicients of $P(x,D)$.

By property $(a)$ of Proposition~\ref{prop1} we have, for $0\leq r\leq q$,
\beqsn
\frac{q!}{(q-r)!}a_{r,\frac1k}\leq
\frac{q!}{(q-r)!}a_{r,\frac1k}a_{q-r,\frac1k}\frac{(q-r)!}{e^{\frac1k\varphi^*((q-r)k)}}
\leq q!a_{q,\frac1k}e^{-\varphi^*(q-r)}
\leq q!a_{q,\frac1k}e^{-\varphi^*(0)}.
\eeqsn
and hence, using the last inequality in \eqref{eq7} and separating the derivatives
$\nabla^\sigma(P^pu)$ for $\sigma\geq m$ and
$0\leq\sigma\leq m-1$ we obtain
\beqsn
\|\nabla^qP^{p+1}u\|_{L^2(K)}\leq
&&(m+1)\sum_{r=0}^q\frac{q!}{(q-r)!}A^{r+1}a_{r,\frac1k}
\|\nabla^{q+m-r}(P^pu)\|_{L^2(K)}\\
&&\ \ \ +me^{-\varphi^*(0)}
A^{q+1}q!a_{q,\frac1k}\sum_{\sigma=0}^{m-1}\|\nabla^\sigma(P^pu)\|_{L^2(K)}.
\eeqsn

Now, by the induction hypothesis \eqref{15} and property $(a)$ of
Proposition~\ref{prop1} we deduce
\beqsn
&&\|\nabla^qP^{p+1}u\|_{L^2(K)}\leq
\frac{(m+1)A}{B}\Big[\sum_{r=0}^q\frac{q!}{(q-r)!}A^{r}a_{r,\frac1k}
B^{-r}e^{\frac1k\varphi^*\left((q+m-r+pm)k\right)}\\
&&\quad\ \ \ \ \ \ \ \ \ \ \ \ \ \ \ \ \   +A^qe^{-\varphi^*(0)}
q!a_{q,\frac1k}\sum_{\sigma=0}^{m-1}B^{\sigma-m-q}e^{\frac1k\varphi^*\left((\sigma+pm)k\right)}\Big]\cdot B^{q+(p+1)m+p+2}\\
&&\quad= \frac{(m+1)A}{B}\Big[\sum_{r=0}^q\frac{q!}{(q-r)!}A^{r}a_{r,\frac1k}
B^{-r}a_{q+(p+1)m-r,\frac1k}(q+(p+1)m-r)!\\
&&\quad\ \ \ \ \ \ \ \ \ \ \ \  +A^qe^{-\varphi^*(0)}
q!a_{q,\frac1k}\sum_{\sigma=0}^{m-1}B^{\sigma-m-q}a_{\sigma+pm,\frac1k}(\sigma+pm)!\Big]\cdot B^{q+(p+1)m+p+2}\\
&&\quad\leq  \frac{(m+1)\max\{1,e^{-(m+1)\varphi^*(0)}\}A}{B}\Bigg[\sum_{r=0}^q\frac{q!}{(q-r)!}\frac{(q+(p+1)m-r)!}{(q+(p+1)m)!}A^rB^{-r}\\
&&\qquad +\sum_{\sigma=0}^{m-1}\frac{q!(\sigma+pm)!}{(q+(p+1)m)!}A^qB^{-q}\Bigg]\cdot B^{q+(p+1)m+p+2}e^{\frac1k\varphi^*\left((q+(p+1)m)k\right)},
\eeqsn
because of $a_{q+\sigma+pm,\frac1k}\leq e^{-(m-\sigma)\varphi^*(0)}a_{q+(p+1)m,\frac1k}
\leq\max\{1,e^{-m\varphi^*(0)}\}a_{q+(p+1)m,\frac1k}$.

Now, since
\beqsn
\frac{q!}{(q-r)!}\frac{(q+(p+1)m-r)!}{(q+(p+1)m)!}=\frac{\binom{q}{r}}{\binom{q+(p+1)m}{r}}
\leq1,
\eeqsn
and
\beqsn
\frac{q!(\sigma+pm)!}{(q+(p+1)m)!}\leq\frac{1}{\binom{q+(p+1)m}{q}}\leq1,
\eeqsn
it is sufficient to take $B\geq\max\{1,e^{-(m+1)\varphi^*(0)}\}(m+2)^2A$ so that the factor in the brackets does not exceed $m+2$ for any $q$ since $A/B<1/2$ and hence all the right hand side is less than or equal to $B^{q+(p+1)m+p+2}e^{\frac1k\varphi^*\left((q+(p+1)m)k\right)}$.

Therefore \eqref{15} is proved by induction and, in particular,
\eqref{roumieu} holds for $q=0$.
An application of Proposition~\ref{prop1}(e) gives the conclusion.
\end{proof}

\begin{proof}[Proof of Theorem~\ref{th13}]
The proof of $(i)$ is similar to the Roumieu case, Theorem~\ref{th13bis}(i), for
$C_{q,k}$ and $c_k$ instead of $C_q$ and $c$.

However, since the
constant $C_1$ of \eqref{8} depends on $k$, we cannot deduce formula
\eqref{12} from $(e)$ of Proposition~\ref{prop1}. To prove $(ii)$ we first remark that $\E_{\{\omega\}}(\Omega)
\subseteq\E_{(\sigma)}(\Omega)$ for $\sigma(t)=o(\omega(t))$ as $t\to\infty$ by \cite[Prop. 4.7]{BMT}.
Therefore by 
Theorem~\ref{th13bis}(ii) we have
\beqsn
\E^P_{(\omega)}(\Omega)\subseteq\E^P_{\{\omega\}}(\Omega)\subseteq
\E_{\{\omega\}}(\Omega)\subseteq\E_{(\sigma)}(\Omega)
\eeqsn
which concludes the proof in the Beurling case.
\end{proof}

We conclude proving that ellipticity is  necessary 
in Theorems~\ref{th13bis}(ii) and \ref{th13}(ii):

\begin{Ex}
\label{ex32}
\begin{em}
Let $P(x,D)$ be a linear partial differential operator with real analytic 
coefficients of order $m$ not elliptic
in $(x_0,\xi_0)\in\Omega\times\R^n$, for a domain $\Omega\subseteq\R^n$
and $\|\xi_0\|=1$, i.e.
\beqsn
P_m(x_0,\xi_0)=0,
\eeqsn
where $P_m$ is the principal part of $P$.

We are going to prove that there exist a function $u$ and
a subadditive weight $\omega$, which is not a Gevrey weight in general and is between two given Gevrey weights, and such that 
$u\in\E^P_{\{\omega\}}(\Omega)\setminus\E_{\{\omega\}}(\Omega)$, and that
$u\in\E^P_{(\omega)}(\Omega)\setminus\E_{(\sigma)}(\Omega)$, for some subadditive weight function $\sigma=o(\omega)$. Consequently, the ellipticity 
of $P$ is
needed for statement $(ii)$ of Theorems~\ref{th13bis} and \ref{th13}.

To construct $\omega$ and the function $u$ we follow \cite{metivier1978propiete}:
for any fixed $s>1$ we choose $\sigma\in(1,s)$ and $\varepsilon>0$ such that
\beqsn
0<\varepsilon<\frac{m(s-\sigma)}{2ms-\sigma}<\frac12\,.
\eeqsn
Then we take $\delta>0$ so that $B(x_0,2\delta)\subset\subset\Omega$ and
$\varphi\in\E_{(t^{1/\sigma})}(\R^n)$ with $\supp\varphi\subset B(0,2\delta)$.
For $\eta=\frac{m-\varepsilon}{ms}$ we finally define, as in \cite{metivier1978propiete},
\beqsn
u(x):=\int_1^{+\infty}\varphi\big(\rho^\varepsilon(x-x_0)\big)e^{-\rho^\eta}
e^{i\rho\langle x-x_0,\xi_0\rangle}d\rho\,.
\eeqsn

It was proved in \cite{metivier1978propiete} that
\beqs
\label{M24}
(D_{\xi_0}^\alpha u)(x_0)=\frac1\eta\Gamma\left(\frac{\alpha+1}{\eta}\right)
+o(1),
\eeqs
where $\Gamma$ is the gamma function, so that $u\notin\E_{\{t^{1/s'}\}}(U)$
in any neighborhood $U$ of $x_0$ for any $s'<1/\eta$ (nor, in particular, for
$s'=s$), but $u\in\E_{\{t^\eta\}}(\R^n)$.
Moreover, it was proved in \cite{metivier1978propiete} that $u\in\E_{\{t^{1/s}\}}^P(\Omega)$.

Let us now consider any subadditive weight function $\omega(t)$ such that
$\omega(t)=o(t^{1/s})$ and $t^{1/s'}=o(\omega(t))$ for $s'>s>1$. For instance,
$\omega(t)=t^{1/s}/\log t$. In general, such a weight exists by \cite[Proposition 1.9]{BMT}.

We have that 
$\E_{(\omega)}(\Omega)\subseteq\E_{\{\omega\}}(\Omega)\subseteq
\E_{\{t^{1/s'}\}}(\Omega)$ 
and 
$\E_{\{t^{1/s}\}}(\Omega)\subseteq\E_{(\omega)}(\Omega)
\subseteq\E_{\{\omega\}}(\Omega)$ by \cite[Prop. 4.7]{BMT}. Analogously $\E^P_{\{t^{1/s}\}}(\Omega)\subseteq\E^P_{(\omega)}(\Omega)
\subseteq\E^P_{\{\omega\}}(\Omega)$, 
so that $u\in\E^P_{\{\omega\}}(\Omega)\setminus\E_{\{\omega\}}(\Omega)$ and 
ellipticity is necessary in Theorem~\ref{th13bis}~(ii).

Moreover, if $\sigma(t):=t^{1/s'}$ we clearly have $u\in\E^P_{(\omega)}(\Omega)\setminus\E_{(\sigma)}(\Omega)$.
Since $\sigma(t)=o(\omega(t))$ as $t\to\infty$, this proves that ellipticity is necessary in
Theorem~\ref{th13}~(ii).

\end{em}
\end{Ex}

{\bf Acknowledgments:}
The research of the second author was partially supported by the project PID2020-119457GB-100 funded by MCIN/AEI/10.13039/501100011033 and by ``ERDF A way of making Europe'', and by the project GV AICO/2021/170.

\end{document}